\theoremstyle{plain}
\newtheorem{thm}{Theorem}[section]
\newtheorem{lem}[thm]{Lemma}
\newtheorem{conj}[thm]{Conjecture}
\noindent \emph{Proof.} {}{#1}{}}{\hfill
\theoremstyle{plain} 
\newcommand{\thistheoremname}{}
\newtheorem{genericthm}[section]{\thistheoremname}
\theoremstyle{definition}
\def\es{\emptyset}
\def\less{\setminus}
\newcounter{counter}
\def\dfn#1{{\sl #1}}
\def\es{\emptyset}
\def\less{\setminus}
\newcommand{\ceil}[1]{{\left\lceil #1 \right\rceil}}
\title{Odd clique minors in graphs with independence number two}
\author{
{\small  Yuqing Ji$^a$, \ \ Zi-Xia Song$^{b,}$\thanks{Supported by  NSF grant DMS-2153945}
 }, \ \ Evan Weiss$^b$, \ \   Xia Zhang$^{a,}$\thanks{
Supported by the National Natural Science Foundation of China (No.12071265)}\\
 \footnotesize{ $^a$School of Mathematics and Statistics, Shandong Normal University, Jinan, 250358, China\\
 $^b$ Department of Mathematics, University of Central Florida, Orlando, FL 32816, USA}
}
\begin{document}
\maketitle

\begin{abstract}
A $K_t$-expansion consists of $t$ vertex-disjoint trees, every two of
which are joined by an edge. We call such an expansion odd if
its vertices can be two-colored so that the edges of the trees are
bichromatic but the edges between trees are monochromatic. A graph contains an odd $K_t$ minor or an odd clique minor of order $t$ if it contains an odd $K_t$-expansion.  Gerards and Seymour from 1995 conjectured that every graph $G$ contains an odd $K_{\chi(G)}$ minor, where $\chi(G)$ denotes the chromatic number of $G$.  This conjecture is referred to as ``Odd Hadwiger's Conjecture".   Let $\alpha(G)$ denote the independence number of a graph $G$.    In this paper we investigate the Odd Hadwiger's Conjecture for graphs $G$ with $\alpha(G)\le2$. We first observe that a graph $G$ on $n$ vertices with $\alpha(G)\le2$   contains an odd $K_{\chi(G)}$ minor if and only if $G$ contains an odd clique minor of order $\ceil{n/2}$. We then prove that  every graph $G$ on $n$ vertices with  $\alpha(G)\le 2$ contains an odd clique minor of order $\ceil{n/2}$  if $G$ contains a clique of order $n/4$ when $n$ is even and $(n+3)/4$ when $n$ is odd, or $G$ does not contain $H$ as an induced subgraph, where $\alpha(H)\le 2$ and $H$ is an induced subgraph of   
$K_1 + P_4$,  $K_2+(K_1\cup K_3)$,  $K_1+(K_1\cup K_4)$, $K_7^-$, $K_7$,  or the     kite graph. 

\end{abstract}

\section{Introduction}

All graphs in this paper are finite and simple. For a graph $G$ we use $|G|$, $e(G)$, $\delta (G)$, $\Delta(G)$, $\alpha(G)$, $\chi(G)$, $\omega(G)$ to denote the number
of vertices, number of edges,   minimum degree, maximum degree,  independence number,  chromatic number, and clique number   of $G$, respectively.  The \dfn{complement} of $G$ is denoted by $\overline{G}$.  
Given a graph $H$, we say that $G$ is \dfn{$H$-free} if $G$ has no induced subgraph isomorphic to $H$. For a family  $\mathcal{F}$ of graphs, we say that $G$ is $\mathcal{F}$-free if $G$ is $F$-free for every  $F\in \mathcal{F}$.   A \dfn{clique} in $G$ is a set of vertices all pairwise adjacent.   For a vertex $x\in V(G)$, we  use $N(x)$ to denote the set of vertices in $G$ which are adjacent to $x$.
We define $N[x] := N(x) \cup \{x\}$.  The degree of $x$ is denoted by $d_G(x)$ or
simply $d(x)$.   If  $A, B\subseteq V(G)$ are disjoint, we say that $A$ is \emph{complete} to $B$ if each vertex in $A$ is adjacent to all vertices in $B$, and $A$ is \emph{anticomplete} to $B$ if no vertex in $A$ is adjacent to any vertex in $B$.
If $A=\{a\}$, we simply say $a$ is complete to $B$ or $a$ is anticomplete to $B$.   We use $e_G(A, B)$ to denote the number of edges between $A$ and $B$ in  $G$. 
The subgraph of $G$ induced by $A$, denoted by $G[A]$, is the graph with vertex set $A$ and edge set $\{xy \in E(G) \mid x, y \in A\}$. We denote by $B \less A$ the set $B - A$,   and $G \less A$ the subgraph of $G$ induced on $V(G) \less A$, respectively. 
If $A = \{a\}$, we simply write $B \less a$    and $G \less a$, respectively.  
    The \dfn{join} $G+H$ (resp. \dfn{union} $G\cup H$) of two 
vertex-disjoint graphs
$G$ and $H$ is the graph having vertex set $V(G)\cup V(H)$  and edge set $E(G)
\cup E(H)\cup \{xy\, |\,  x\in V(G),  y\in V(H)\}$ (resp. $E(G)\cup E(H)$).  
We use the convention   ``A :="  to mean that $A$ is defined to be
the right-hand side of the relation. 
 For any positive integer $k$, we write  $[k]$ for the set $\{1, \ldots, k\}$ and    we use $K_k^-$ to denote the   graph  obtained from the complete graph $K_k$ by deleting one edge. \medskip

   Given graphs $G$ and $H$, we say that $G$ has \emph{an $H$ minor} if a graph isomorphic to $H$ can be obtained from a subgraph   of $G$ by contracting edges. A \dfn{$K_t$-expansion} consists of $t$ vertex-disjoint trees, every two of
which are joined by an edge. We call such an expansion \dfn{odd} if
its vertices can be two-colored so that the edges of the trees are
bichromatic but the edges between trees are monochromatic. A graph $G$ contains an \dfn{odd $K_t$ minor} or an \dfn{odd clique minor of order $t$} if it contains an odd $K_t$-expansion. It can be easily checked that every  graph that  has an 
odd $K_t$ minor
must  contain  $K_t$ as a minor. \medskip

Gerards and Seymour~\cite[Section 6.5]{jto} made the following  conjecture from 1995.

\begin{conj}\label{oddc}
For every integer $t\ge1$, every graph with no odd $K_t$ minor is $(t-1)$-colorable. \end{conj}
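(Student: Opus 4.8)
The plan is to prove the contrapositive of Conjecture~\ref{oddc}---that every graph $G$ with $\chi(G)\ge t$ contains an odd $K_t$ minor---by induction on $t$, promoting an odd $K_{t-1}$-expansion to an odd $K_t$-expansion. Let $G$ be a counterexample minimizing $t$ first and $|G|$ second; then $\chi(G)=t$ and $G$ is vertex-critical, so the usual critical-graph facts apply: $\delta(G)\ge t-1$, $G$ is $2$-connected, and $G$ has no clique cutset. Moreover, since $\chi(G)=t$, for \emph{every} vertex $u$ and every proper $(t-1)$-coloring of $G\less u$ the vertex $u$ sees all $t-1$ colors; in particular $\chi(G\less u)=t-1$ for all $u$. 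The base cases (small $t$) are checked directly or quoted from known results.

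First I would choose a vertex $v$ and apply the inductive hypothesis to $G\less v$, whose chromatic number is $t-1$, to obtain an odd $K_{t-1}$-expansion inside $G\less v$: vertex-disjoint trees $T_1,\dots,T_{t-1}$, pairwise joined by an edge, together with a single $2$-coloring $c$ of $\bigcup_i V(T_i)$ under which every tree-edge is bichromatic while each of the $\binom{t-1}{2}$ inter-tree links is monochromatic. The task is then to build a $t$-th tree $T_t$ out of $v$ and part of its neighborhood and, for each $i\in[t-1]$, to exhibit a monochromatic link between $T_t$ and $T_i$---all of this consistently with one global $2$-coloring that extends $c$ (up to swapping colors on whole trees) and still leaves every $T_i$ properly $2$-colored.

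The step I expect to be the main obstacle is exactly this global consistency of the $2$-coloring, and it is what keeps Conjecture~\ref{oddc} open in general. For an ordinary $K_t$ minor it suffices to find \emph{any} edge between each pair of branch sets, and any such edge can be used independently of the others. For an \emph{odd} minor the linking edges must all be monochromatic with respect to one fixed labeling while every tree stays properly $2$-colored, so the choices interact: flipping the two colors on a single tree to repair one monochromatic link can spoil the monochromaticity of that tree's other links, and merging vertices into a tree can destroy its bichromaticity. Coordinating these parity constraints simultaneously across all pairs is the crux, and no local adjustment of the kind available for ordinary Hadwiger-type inductions is known to control it.

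A realistic intermediate target---and the one this paper pursues---is to restrict to classes where the $2$-coloring is forced, most naturally to dense graphs with $\alpha(G)\le 2$. There $\overline{G}$ is triangle-free and the conjecture for such $G$ is equivalent to the existence of an odd clique minor of order $\ceil{|G|/2}$; since $\ceil{|G|/2}$ trees must then cover almost all of $G$, each tree is essentially a single edge or vertex. This rigidity lets one prescribe the colors of the linking edges in advance and verify the monochromatic-link condition pair by pair, thereby sidestepping the global recoloring difficulty that blocks the general conjecture.
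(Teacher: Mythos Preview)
The statement you are addressing is a \emph{conjecture}, not a theorem: the paper does not prove Conjecture~\ref{oddc} and explicitly records that it remains open for $t\ge 6$. So there is no ``paper's own proof'' to compare against, and your task was not to prove it.

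Your proposal is not a proof either, and you say as much. You set up an induction on $t$, obtain an odd $K_{t-1}$-expansion in $G\setminus v$, and then correctly identify the obstruction: extending the global $2$-coloring so that the new tree $T_t$ links monochromatically to every $T_i$ while all trees remain properly $2$-colored. You then write that ``no local adjustment of the kind available for ordinary Hadwiger-type inductions is known to control it.'' That sentence is the gap. Everything before it is scaffolding; the actual inductive step is never carried out, and you do not propose any mechanism---augmenting paths, a parity argument, a structural decomposition---that would force the required consistency. An outline that names the hard step and then declares it unresolved is a diagnosis, not a proof.

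Your final two paragraphs pivot to what the paper actually does: restrict to $\alpha(G)\le 2$, where the conjecture is equivalent to $oh(G)\ge\lceil n/2\rceil$ and the branch sets are forced to be vertices or seagulls. That summary is accurate, but it is a description of the paper's partial results (Theorems~\ref{oddalpha2}, \ref{t:oclique}, \ref{main}), not a proof of Conjecture~\ref{oddc}. If the intent was to explain why the paper attacks only special cases rather than the full conjecture, say so up front; as written, the proposal reads as an attempted proof that abandons itself midway.
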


This conjecture is referred to as ``Odd Hadwiger's Conjecture". It is substantially stronger than Hadwiger's Conjecture which states that every graph with no $K_t$ minor is $(t-1)$-colorable. Conjecture \ref{oddc} is trivially true for $t \leq 3$. The case $t = 4$ was proved by Catlin \cite{cat}. When $t = 5$, Guenin \cite{gue} announced at a meeting in Oberwolfach a solution of the case $t = 5$. It remains open for $t \geq 6$. Geelen et al. \cite{ggr} proved that every graph with no odd $K_{t}$-minor is $O(t\sqrt{\log t})$-colorable. Subsequently, an asymptotical improvement of this upper bound to $O(t(\log t)^{\beta})$ for any $\beta > \frac{1}{4}$ was achieved by Norin and the second author~\cite{ns}. This was improved further to $O(t(\log \log t)^{6})$ by Postle~\cite{pos},  and later refined to  $O(t(\log \log t)^{2})$ by Delcourt and Postle \cite{dp}.  The current best known bound  is $O(t(\log \log t))$ due to 
Steiner~\cite{best}. For more information on  Odd Hadwiger's Conjecture, we refer the readers   to the surveys  \cite{ks, kaw, ste}.\\

 Let $oh(G)$ denote the largest integer $t$ such that $G$ has an odd $K_t$ minor. Then  Odd Hadwiger's Conjecture states that $oh(G)\ge \chi(G)$ for every graph $G$.  Note that  $|G|\le \alpha(G)\cdot \chi(G)$.    Odd Hadwiger's Conjecture implies the following weaker conjecture;  the odd-minor variant of Duchet-Meyniel Conjecture~\cite{dm} which states that  $h(G)\ge   \lceil |G|/\alpha(G) \rceil $ for every graph $G$, where   $h(G)$ denotes the largest $t$ such that $G$ contains $K_t$ as a minor. 

\begin{conj}\label{c:oddalpha2}
  $oh(G)\ge   \lceil |G|/\alpha(G) \rceil $ for every graph $G$.
\end{conj}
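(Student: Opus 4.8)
The plan is to attack Conjecture~\ref{c:oddalpha2} only in the first nontrivial regime $\alpha(G)\le 2$, which is the setting of this paper; in general it is implied by Odd Hadwiger's Conjecture (since $\chi(G)\ge\lceil |G|/\alpha(G)\rceil$) but is a weaker statement and remains wide open. So fix $G$ on $n$ vertices with $\alpha(G)\le 2$; the goal is to build an odd $K_{\lceil n/2\rceil}$ expansion. The first step is the stated equivalence, for such $G$, between odd $K_{\chi(G)}$ minors and odd $K_{\lceil n/2\rceil}$ minors. One direction is immediate: $n\le\alpha(G)\chi(G)\le 2\chi(G)$ gives $\chi(G)\ge\lceil n/2\rceil$, so an odd $K_{\chi(G)}$ minor contains an odd $K_{\lceil n/2\rceil}$ minor. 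For the other direction, $\overline G$ is triangle-free, so its only cliques are vertices and edges and hence $\chi(G)=n-\nu(\overline G)$; if $\chi(G)>\lceil n/2\rceil$, the set $W$ of vertices missed by a maximum matching $M$ of $\overline G$ is independent in $\overline G$, i.e.\ a clique of $G$, with $|W|=2\chi(G)-n\ge 2$, and $G-W$ satisfies $\alpha\le 2$, has complement with perfect matching $M$, and has $\chi(G-W)=\lceil|G-W|/2\rceil$. So it suffices to prove the statement when $\overline G$ has a near-perfect matching and then to glue the clique $W$ back onto the expansion. (One may also assume $n$ is odd: for even $n$, deleting a vertex and applying the result for $n-1$ already yields an odd $K_{n/2}$ minor, since $\lceil (n-1)/2\rceil=n/2$.)

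For the core case I would realize the $\lceil n/2\rceil$ branch sets as single vertices and edges of $G$ and look for the right $2$-colouring. Concretely one wants a near-perfect matching $\{a_ib_i\}$ of $G$ together with a choice, for each matched pair, of which endpoint is ``red'' ($a_i$) and which is ``blue'' ($b_i$), such that for all $i\ne j$ either $a_ia_j\in E(G)$ or $b_ib_j\in E(G)$; equivalently, in the triangle-free graph $\overline G$ the edges $a_ia_j$ and $b_ib_j$ are never both present. Colouring $a_i$ red and $b_i$ blue then makes every matching edge bichromatic and supplies a monochromatic connecting edge between each pair of branch sets, which is exactly an odd $K_{\lceil n/2\rceil}$ expansion. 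The point of $\overline G$ being triangle-free is that it gives local control when choosing the labels greedily: for any non-edge $\{x,y\}$ of $G$, every other vertex is $G$-adjacent to $x$ or to $y$.

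The hard part is precisely this orientation step in full generality, together with the gluing of $W$ (which needs every vertex of $W$ to be $G$-adjacent to a red vertex of each branch set): for an arbitrary $\alpha\le 2$ graph there is no evident way to choose the matching and labelling so as to avoid every forbidden pair $\{a_ia_j,b_ib_j\}$. This is where the extra hypotheses of the theorems enter. Under the large-clique hypothesis I would take the clique $Q$ (of order about $n/4$) as a monochromatic red core of $|Q|$ singleton branch sets and distribute the remaining vertices into about $n/4$ further branch sets of size roughly three, using adjacencies to $Q$ to provide most of the required monochromatic connections and leaving the residual conditions to a direct argument. For the induced-subgraph conditions I would run a structural, Ramsey-flavoured analysis: forbidding each of the listed graphs $H$ (all with $\alpha(H)\le 2$) as an induced subgraph constrains the triangle-free graph $\overline G$ enough --- pushing it towards a blow-up of a small triangle-free graph, or forcing a near-perfect matching with extra regularity --- that the matching and its orientation can be found by a finite case analysis. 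The main obstacle throughout is this combinatorial bookkeeping for the orientation and the gluing; removing the side hypotheses seems to require a genuinely new idea, consistent with the conjecture remaining open even for $\alpha(G)\le 2$.
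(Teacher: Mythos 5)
This statement is a conjecture, not a theorem: the paper does not prove it, and your proposal does not either, as you yourself acknowledge at the end. So the honest verdict is that there is no proof on either side to compare in full; what can be compared is your plan against the partial results the paper actually establishes (Theorems~\ref{oddalpha2}, \ref{t:oclique} and \ref{main}), and there your routes diverge from the paper's in ways that matter.

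For the equivalence (Theorem~\ref{oddalpha2}), your argument via $\chi(G)=n-\nu(\overline G)$ and the set $W$ of vertices exposed by a maximum matching of $\overline G$ is correct up to the point where you must ``glue the clique $W$ back onto the expansion.'' That step has a genuine hole: a vertex $w\in W$, made into a red singleton branch set, needs a monochromatic edge to every other branch set, i.e.\ an edge to a \emph{red} vertex of each two-vertex branch set $\{a_i,b_i\}$; since the non-neighbours of $w$ form a clique of $G$, $w$ can miss exactly the red endpoint $a_i$ while seeing $b_i$, and then no monochromatic connection exists. This is precisely the obstruction the paper flags in the introduction (``a dominating edge is insufficient for constructing the desired odd clique minor''). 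The paper's proof of Theorem~\ref{oddalpha2} avoids gluing altogether: it takes a minimal counterexample, shows it is $\chi(G)$-critical with $n\le 2\chi(G)-1$, splits on whether $\overline G$ is connected (using the join decomposition if not), and invokes Gallai's theorem on the order of colour-critical graphs with connected complement to force $n=2\chi(G)-1$, at which point $\chi(G)=\lceil n/2\rceil$ and there is nothing left to prove. For the large-clique case, the paper does not build branch sets of size three around the clique as you suggest; it uses Lemma~\ref{l:highconnected} (Chen--Deng) to assume $\lceil n/2\rceil$-connectivity and then packs $\lceil n/2\rceil-\omega(G)$ disjoint seagulls outside a maximum clique via Theorem~\ref{t:seagulls2}, each seagull being a properly two-colourable branch set. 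Finally, the ``orientation'' step you identify as the crux for general $\alpha\le2$ graphs is indeed the open problem; your proposal names it but does not solve it, so the conjecture itself remains unproved here as in the paper.
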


 Kawarabayashi and the second author~\cite{ks} proved that $oh(G)\ge   \lceil |G|/(2\alpha(G)-1) \rceil $ for every graph $G$. This result is analogous to a classical bound by Duchet and Meyniel~\cite{dm}. In the same paper,  Kawarabayashi and the second author~\cite{ks} further improved the bound for graphs with independence number three,  showing that  $oh(G)\ge   \lceil |G|/4 \rceil $ in this case. 
 One particular interesting case of Hadwiger's Conjecture is  when graphs have independence number two. It has attracted more attention recently (see Section~4 in Seymour's survey~\cite{sem} for more information). As  stated in his survey,  Seymour believes that if Hadwiger's Conjecture is true for graphs $G$ with $\alpha(G)=2$, then it is probably true  in general.  Recall that Hadwiger's Conjecture states that $h(G)\ge \chi(G)$.   Plummer, Stiebitz, and Toft~\cite{pst} proved that  Hadwiger's Conjecture and Duchet-Meyniel Conjecture are equivalent for graphs $G$ with $\alpha(G)\le2$; in the same paper they also proved that  Hadwiger's Conjecture holds for every $H$-free graph $G$ with $\alpha(G)=2$, where $H$ is any graph on four vertices and $\alpha(H)=2$.  Kriesell~\cite{kri} then extended their result and proved that Hadwiger's Conjecture holds for every $H$-free graph $G$ with $\alpha(G)=2$, where $H$ is any graph on five vertices and $\alpha(H)=2$.  Theorem~\ref{t:clique}  is a deep result of Chudnovsky and Seymour~\cite{cs}. 
\begin{thm}[Chudnovsky and Seymour~\cite{cs}]\label{t:clique}
Let $G$ be a graph on $n$ vertices with $\alpha(G)\le2$.   If 
\[
\omega(G)\ge \begin{cases}
   n/4, & \text{if $n$ is even}\\
   (n+3)/4, & \text{if n is odd,}
\end{cases}
\]
then   $h(G) \ge \chi(G)$.
\end{thm}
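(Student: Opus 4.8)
The plan is to reduce the theorem to a structural question about the triangle-free graph $\overline{G}$, and then to settle that question by an extremal argument. Since $\alpha(G)\le 2$, the equivalence established by Plummer, Stiebitz, and Toft~\cite{pst} lets us replace Hadwiger's bound by the Duchet--Meyniel bound, so it is enough to prove $h(G)\ge \lceil n/2\rceil$. First I would pass to a matching: a maximum matching of $G$ misses an independent set, hence at most two vertices; if $G$ is disconnected then, as $\alpha(G)\le 2$, it is the disjoint union of two cliques and $h(G)\ge\lceil n/2\rceil$ is immediate, so assume $G$ is connected, and then the Tutte--Berge formula (the number of odd components of $G-U$ never exceeds $|U|+1$, because $G-U$ has at most two components) shows that $G$ has a matching $M$ covering all but $n\bmod 2$ of its vertices. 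Now $h(G)\ge\lceil n/2\rceil$ follows as soon as $M$ can be chosen so that the contraction $G/M$ is complete --- i.e.\ so that every two edges of $M$ have an edge of $G$ between them --- together with a short argument absorbing the at most one uncovered vertex (by a local exchange, or into a single induced-$P_3$ branch set); for $n$ even this condition is moreover necessary, as any $\lceil n/2\rceil$ branch sets covering $n$ vertices must all be edges. Writing $H:=\overline{G}$, which is triangle-free because $\alpha(G)\le 2$, the edges of $M$ become non-edges of $H$, the failure ``some two edges of $M$ have no edge of $G$ between them'' becomes ``some two edges of $M$ induce a $4$-cycle in $H$'', and the hypothesis on $\omega(G)$ becomes $\alpha(H)\ge\lceil n/4\rceil$ (up to a harmless correction in one residue class). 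So it remains to prove: every triangle-free graph $H$ on $n$ vertices with $\alpha(H)\ge\lceil n/4\rceil$ admits a near-perfect matching $M$ of $\overline{H}$ no two of whose edges induce a $4$-cycle in $H$.

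To build such an $M$ I would exploit that $\alpha(H)$ is linear in $n$. If $\alpha(H)\ge n/2$ then $H$ has an independent set of size $\ge\lceil n/2\rceil$, which is a clique of $\overline{H}$, and we are done; so assume $n/4\le\alpha(H)<n/2$ and fix a maximum independent set $S$ of $H$, i.e.\ a maximum clique of $\overline{H}$, of linear size. Triangle-freeness forces every vertex of $T:=V(H)\setminus S$ to have a non-neighbour in $S$, and, together with the maximality of $S$, further constrains both the bipartite graph between $S$ and $T$ and the graph $H[T]$. The idea is then to match the (comparatively few) vertices of $T$ to vertices of $S$ or to one another, using the clique $S$ to absorb the parity of $|T|$ and to pair up its leftover vertices freely, while a Hall/deficiency-type argument on an appropriate auxiliary graph guarantees the matching can be completed so that no two of its edges induce a $4$-cycle of $H$.

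The crux --- and, I expect, essentially all of the difficulty --- is to rule out the obstructions; equivalently, to establish the contrapositive: if a triangle-free graph $H$ admits no such matching (and no near-substitute using a $P_3$), then $\alpha(H)<\lceil n/4\rceil$. Here I would use Berge-type alternating-path arguments to extract a structural description of the triangle-free graphs $H$ for which every near-perfect matching of $\overline{H}$ is forced to contain two edges inducing a $4$-cycle of $H$, and then bound $\alpha(H)$ from above within that short list of structures. This classification, with its accompanying extremal estimates, is exactly the substantial content of the Chudnovsky--Seymour argument invoked in \cite{cs}; by contrast, the reduction in the first paragraph and the parity bookkeeping in the second are comparatively routine. (As a consistency check, a natural family of tight examples for Hadwiger's Conjecture among graphs with independence number two is $G=C_5[K_t]$, for which $\alpha(G)=2$, $\omega(G)=2n/5\ge n/4$, and $\chi(G)=\lceil n/2\rceil$; these do admit the required matching --- pair up two consecutive blobs, then another two, and match the last blob internally --- so they are not counterexamples.)
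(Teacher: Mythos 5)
Your opening reduction is the same as the paper's: by the Plummer--Stiebitz--Toft equivalence it suffices to prove $h(G)\ge\lceil n/2\rceil$. After that, however, you diverge from what \cite{cs} actually proves, and the divergence opens a genuine gap. The paper obtains the $K_{\lceil n/2\rceil}$ minor from Theorem~\ref{t:seagulls2}: take a maximum clique $K$, check $\tfrac32\lceil n/2\rceil-\tfrac n2\le|K|\le\lceil n/2\rceil$, and pack $\lceil n/2\rceil-|K|$ pairwise disjoint seagulls in $G\setminus K$; the branch sets are the $|K|$ singletons of $K$ together with these induced $P_3$'s, and the hypothesis $\omega(G)\ge n/4$ is precisely the inequality $|K|+3(\lceil n/2\rceil-|K|)\le n$ that lets branch sets of sizes $1$ and $3$ fit inside $n$ vertices. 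Your reformulation --- a near-perfect matching $M$ of $G$ with every two edges of $M$ joined --- is a different combinatorial target. Your claim that for $n$ even this is \emph{necessary} (``any $\lceil n/2\rceil$ branch sets covering $n$ vertices must all be edges'') rests on a false premise: branch sets of a minor need not cover $V(G)$, and the Chudnovsky--Seymour construction indeed uses only $3\lceil n/2\rceil-2|K|\le n$ vertices. A symptom of the drift is that in your matching formulation the hypothesis $\omega(G)\ge n/4$ never enters the vertex count (a perfect matching always uses all $n$ vertices), whereas in the seagull formulation it is exactly the load-bearing condition.

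More seriously, the existence of the required matching is never established. The Tutte--Berge step only produces \emph{some} near-perfect matching, which is useless here; the entire difficulty is the ``no two edges induce a $C_4$ of $\overline G$'' condition, for which you offer only a sketch (``Hall/deficiency-type argument,'' ``Berge-type alternating-path arguments,'' an unspecified classification). You then assert that this classification ``is exactly the substantial content of the Chudnovsky--Seymour argument invoked in \cite{cs},'' but it is not: \cite{cs} proves a seagull-packing theorem (Theorems~\ref{t:seagulls} and~\ref{t:seagulls2}), not a statement about matchings avoiding induced $C_4$'s in the complement. So the key lemma of your proof is neither proved nor legitimately outsourced, and it is not even clear that it is true as stated. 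To repair the argument along the paper's lines, you would replace the matching lemma by the seagull-packing theorem and verify its four hypotheses (order, $\ell$-connectivity, clique capacity, anti-matching) for $G\setminus K$ with $\ell=\lceil n/2\rceil-|K|$, which is where the clique-size assumption does its work.
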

 \medskip

  Following the ideas of Plummer, Stiebitz and Toft in~\cite{pst}, one can easily prove that  Odd Hadwiger's Conjecture and Conjecture~\ref{c:oddalpha2} are equivalent for graphs $G$ with $\alpha(G)\le2$. We provide a proof here for completeness. 

\begin{thm}\label{oddalpha2} Let $G$ be a graph on $n$ vertices  with $\alpha(G)\le2$. Then 
\[oh(G)\ge \chi(G)  \text{ if and only if  } oh(G)\ge \lceil n/2\rceil.\]
\end{thm}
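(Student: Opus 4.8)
The forward implication is immediate: because $\alpha(G)\le 2$, every colour class of a proper colouring of $G$ has at most two vertices, so $\chi(G)\ge\lceil n/\alpha(G)\rceil\ge\lceil n/2\rceil$, and hence $oh(G)\ge\chi(G)$ forces $oh(G)\ge\lceil n/2\rceil$. For the reverse implication I would assume $oh(G)\ge\lceil n/2\rceil$ and prove $oh(G)\ge\chi(G)$ by induction on $n$; the cases $\alpha(G)\le 1$ (where $G=K_n$) and small $n$ are trivial.

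The working reformulation is that a proper colouring of $G$ is the same as a partition of $V(G)$ into cliques, and since $\alpha(G)\le 2$ each of those cliques has at most two vertices; thus $\chi(G)=n-\nu(\overline G)$, where $\nu(\overline G)$ is the matching number of the triangle-free graph $\overline G$. If $\chi(G)=\lceil n/2\rceil$ there is nothing to prove, so assume $d:=2\chi(G)-n\ge 2$. Fixing a maximum matching $M$ of $\overline G$ and letting $U$ be the set of the $d$ vertices it misses, an edge of $\overline G$ inside $U$ would enlarge $M$, so $U$ is a clique of $G$ of size $d$, and $M$ is a perfect matching of $\overline G-U$, so $\chi(G-U)=\tfrac12(n-d)$. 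Two elementary facts about odd expansions get used repeatedly: (i) deleting one branch set of an odd $K_t$-expansion leaves an odd $K_{t-1}$-expansion, so $oh(G-v)\ge oh(G)-1$ for every vertex $v$; and (ii) in any odd expansion the singleton branch sets are pairwise adjacent and hence receive a common colour, which in particular yields $oh(H_1+H_2)\ge oh(H_1)+oh(H_2)$ for a join.

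The plan is then to pass to a minimal counterexample $G$ and produce a strictly smaller one. One first reduces to the case that $G$ and $\overline G$ are both connected: a disconnected $G$ with $\alpha(G)\le 2$ has at most two components, each a clique, whence $oh(G)=\omega(G)=\chi(G)$; and a join $G=H_1+H_2$ is dealt with using the inductive statement on the parts together with (ii). Next, using that $\chi(G-w)\in\{\chi(G),\chi(G)-1\}$ for every $w$, one observes that if some optimal odd $K_{oh(G)}$-expansion either misses a vertex $v$ or uses $\{v\}$ as a branch set while $\chi(G-v)=\chi(G)$, then $G-v$ is a smaller counterexample; this drives one to the residual situation $oh(G)=\chi(G)-1$, in which every vertex outside the Gallai--Edmonds set $D(\overline G)$ lies non-trivially in every optimal expansion and $\overline G[D(\overline G)]$ is a disjoint union of triangle-free factor-critical graphs. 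Finishing off this last configuration --- by deleting a carefully chosen pair of vertices (for instance a non-edge of $G$ lying in $M$, or a suitable pair inside a non-trivial factor-critical component of $\overline G[D(\overline G)]$) so that $\chi$ and $oh$ drop by exactly the same amount while the hypothesis $oh(G)\ge\lceil n/2\rceil$ is preserved for the smaller graph --- is the technical heart of the argument, and the place where $\alpha(G)\le 2$ is genuinely exploited, through the fact that every non-neighbourhood in $G$ is a clique. Arranging that the drop in $\chi$ and the drop in $oh$ coincide is the step I expect to be the main obstacle.
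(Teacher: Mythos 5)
Your skeleton --- minimal counterexample, reduction to the case where $G$ and $\overline{G}$ are both connected via the join inequality $oh(H_1+H_2)\ge oh(H_1)+oh(H_2)$, the identity $\chi(G)=n-\nu(\overline{G})$, and the reduction to $d:=2\chi(G)-n\ge 2$ --- is sound and runs parallel to the paper's up to that point. But the proof is not finished: the entire endgame (the Gallai--Edmonds analysis of $\overline{G}$ and the deletion of ``a carefully chosen pair of vertices... so that $\chi$ and $oh$ drop by exactly the same amount'') is described only as a plan, and you yourself flag the matching of the two drops as the main obstacle. Nothing in the proposal shows how to control $oh$ under those deletions, so this is a genuine gap, and it is exactly the part of the argument that carries all the difficulty.

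The missing ingredient is that no such endgame is needed. The paper first upgrades your partial observation to full criticality: if $\chi(G\setminus v)=\chi(G)$ for some $v$, the induction hypothesis applied to $G\setminus v$ already yields $oh(G)\ge oh(G\setminus v)\ge\chi(G\setminus v)=\chi(G)$, so a minimal counterexample is $\chi(G)$-critical (your version, which only handles vertices missed by an optimal expansion, does not give this). It then invokes Gallai's theorem on color-critical graphs (Kritische Graphen II, cited as \cite{gal} in the paper): a $k$-critical graph with connected complement has at least $2k-1$ vertices. Since $\overline{G}$ is connected, this forces $n\ge 2\chi(G)-1$, i.e.\ $d\le 1$, contradicting $d\ge 2$; equivalently $n=2\chi(G)-1$ gives $\chi(G)=\lceil n/2\rceil$, so the hypothesis $oh(G)\ge\lceil n/2\rceil$ finishes the proof with no further construction. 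Replacing your Gallai--Edmonds endgame with this single structural theorem closes the gap; without it (or an equivalent bound on the order of critical graphs with connected complement), the argument does not go through.
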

\begin{proof} Since $\alpha(G)\le 2$, we see that  $\chi(G)\ge \lceil n/2\rceil$, that is, $n\le 2\chi(G)$. It suffices to show that if $oh(G)\ge \lceil n/2\rceil$, then $oh(G)\ge \chi(G)$. Suppose  $oh(G)\ge \lceil n/2\rceil$ but $oh(G)< \chi(G)$. We choose $G$ with $n$ minimum.  
If  $\chi(G \less v) = \chi(G)$ for some  vertex $v \in V (G)$, then  by the minimality of $n$,    $oh(G)\ge oh(G - v)\ge \chi(G\less v)=\chi(G)\ge \lceil n/2\rceil$, a contradiction.   Thus  $G$ is $\chi(G)$-critical.  Suppose  $n = 2\chi(G)$. Then for every vertex $v\in V(G)$, by the minimality of $n$, we have  $oh(G-v)\ge  \chi(G - v)$.  Thus  
\[oh(G)\ge oh(G\less v)\ge \chi(G \less v) \ge \lceil (n-1)/2\rceil =\chi(G),\]
 a contradiction. This proves that $n \leq 2\chi(G) - 1$.  

Suppose next  $\overline{G}$ is disconnected.  Then $G$  is the join of  two vertex-disjoint graphs, say  $G_1$ and $G_2$. By the minimality of $n$, $oh(G_i) \ge  \chi(G_i)$ for each $i\in[2]$. It follows that  \[oh(G)\ge oh(G_1)+oh(G_2)\ge \chi(G_1)+\chi(G_2)=\chi(G),\] 
a contradiction.  Thus $\overline{G}$ is connected.    By a deep result of Gallai~\cite{gal} on the order of $\chi(G)$-critical graphs, we have $n = 2\chi(G) - 1$.    But then $oh(G)<\chi(G)= \lceil n/2\rceil$, contrary to our assumption that $oh(G)\ge \lceil n/2\rceil$.   
\end{proof}

A \dfn{seagull} in a graph is an induced path on three vertices.  It is worth noting that  Theorem~\ref{t:clique} follows from Theorem~\ref{t:seagulls2} on the existence of pairwise disjoint seagulls in graphs $G$ with $\alpha(G)\le2$.

\begin{thm}[Chudnovsky and Seymour~\cite{cs}]\label{t:seagulls2}  Let $G$ be a graph on $n$ vertices with $\alpha(G)\le2$ such that $G$ is $\ceil{\frac n2}$-connected. Let $K$ be a largest clique in $G$.  If 
\[\frac32\ceil{\frac n2}-\frac n2\le |K|\le \ceil{\frac n2},\]
then $G\less K$ contains $\ceil{n/2}-|K|$ pairwise disjoint seagulls. 

\end{thm}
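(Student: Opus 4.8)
The plan is to pass to the complement and induct on $n$, peeling off one seagull at a time. Write $s := \lceil n/2 \rceil$ and $k := s - |K|$. I would first record three consequences of the hypotheses. (i) The inequality $\tfrac{3}{2}s - \tfrac{n}{2} \le |K|$ rearranges to $|V(G) \setminus K| = n - |K| \ge 3k$, so $G \setminus K$ has at least enough room for $k$ pairwise disjoint seagulls. (ii) Deleting the $|K|$ vertices of $K$ from the $s$-connected graph $G$ leaves $G \setminus K$ being $k$-connected. (iii) If $k \ge 1$ then $G \setminus K$ contains a seagull: otherwise $G \setminus K$ is $P_3$-free, hence a disjoint union of cliques, hence --- since $\alpha(G \setminus K) \le 2$ --- either a single clique (impossible, as that forces $\omega(G) \ge n - |K| > |K|$) or two nonempty anticomplete cliques (impossible, as then $K$ is a cut of $G$ of size $|K| = s - k < s$). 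It is convenient to view the problem in $H := \overline{G}$, which is triangle-free because $\alpha(G) \le 2$: a seagull of $G$ is a vertex triple inducing $K_2 \cup K_1$ in $H$, $K$ is a maximum stable set of $H$, and the goal is to pack $k$ pairwise disjoint copies of $K_2 \cup K_1$ into $H - K$.

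Assume $k \ge 1$, so $|K| \le s - 1$. \emph{Even $n$.} Choose a seagull $S = \{a,b,c\}$ in $G \setminus K$ with $a \not\sim c$, and put $G' := G \setminus S$; it has $n - 3$ vertices, $\alpha(G') \le 2$, and $\omega(G') = |K|$ (because $K \subseteq V(G') \subseteq V(G)$). Since $n$ is even, $n-3$ is odd with $\lceil (n-3)/2 \rceil = s - 1$, and $\tfrac{3}{2}\lceil (n-3)/2 \rceil - \tfrac{n-3}{2} = \tfrac{n}{4} \le |K|$, so $G'$ inherits the clique bound. Thus, provided $S$ is chosen so that $G'$ remains $(s-1)$-connected, the induction hypothesis applied to $G'$ produces $\lceil (n-3)/2 \rceil - |K| = k - 1$ pairwise disjoint seagulls in $G' \setminus K$, which together with $S$ give the required $k$.

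\emph{Odd $n$.} Here deleting a seagull lowers $\lceil n/2 \rceil$ by two while we gain only one seagull, so the reduction above falls one short; instead I would reduce to the even case. If some vertex $v$ lies in every maximum clique of $G$, then $G - v$ has $n-1$ (even) vertices, $\alpha(G-v) \le 2$, $\kappa(G-v) \ge s-1 = \lceil (n-1)/2 \rceil$, and $\omega(G-v) = |K|-1$ with $K \setminus \{v\}$ a largest clique of $G-v$, while $|K|-1 \ge \tfrac{n+3}{4} - 1 = \tfrac{n-1}{4}$; so the even case applies to $G-v$ and produces $\lceil (n-1)/2 \rceil - (|K|-1) = s - |K| = k$ pairwise disjoint seagulls in $(G-v) \setminus (K \setminus \{v\}) = G \setminus K$. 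If no vertex lies in all maximum cliques of $G$, then the maximum cliques have no common point, and together with $\alpha(G) \le 2$ (so that the non-neighborhood of every vertex is a clique) this constrains $G$ tightly enough to exhibit the $k$ seagulls directly. Alternatively, the statement can be read off from a min-max formula for seagull packing in graphs with $\alpha \le 2$ by verifying that the hypotheses rule out every obstruction.

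The one substantive difficulty is the step left open above: we may delete only a seagull whose removal keeps $G$ being $(s-1)$-connected, a drop of one, whereas deleting three arbitrary vertices can cost three. Here I would use that $\alpha(G) \le 2$ makes vertex cuts rigid. A cut $T$ of $G$ splits $G - T$ into at most two parts (a third part would produce a stable triple), and each part is a clique (it lies in the non-neighborhood of a vertex of the other part); so $G = A \cup T \cup B$ with $A$ and $B$ anticomplete cliques, $|A| + |B| = n - |T| \le \lfloor n/2 \rfloor$, $|A|, |B| \le |K|$, and $K \subseteq A \cup T$ or $K \subseteq B \cup T$. If $G$ is $(s+2)$-connected there is no cut of size $\le s+1$ and any seagull may be removed; otherwise the rigid structure of a cut of size $s$ or $s+1$ should reduce $G$ to a short list of explicit graphs on which the seagulls can be exhibited directly, or else yield a seagull disjoint from every minimum cut. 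I expect this separator analysis --- and the parity asymmetry behind it, which is precisely why Theorem~\ref{t:clique} splits into an even and an odd clique bound --- to be the bulk of the work.
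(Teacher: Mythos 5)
This theorem is imported by the paper from Chudnovsky and Seymour without an in-paper proof, so the fairest benchmark is Claim~\ref{ell} in the proof of Theorem~\ref{main}, where the authors carry out exactly the argument that establishes statements of this type: apply the min--max Theorem~\ref{t:seagulls} to $G\less K$ with $\ell=\ceil{n/2}-|K|$ and verify its four conditions. Your proposal instead attempts a direct induction, peeling off one seagull at a time, and as written it has two genuine gaps. First, the inductive step needs a seagull $S$ in $G\less K$ whose deletion leaves $G\less S$ at least $\left(\ceil{n/2}-1\right)$-connected, whereas deleting three vertices can in general lower connectivity by three; you flag this yourself, sketch a separator analysis, and then defer it (``I expect this \dots to be the bulk of the work''). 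That deferred step is precisely where the difficulty of the theorem is concentrated, and it is not clear that a ``good'' seagull always exists or that the exceptional separator configurations reduce to a finite check. Second, in the odd-$n$ case the subcase in which no vertex lies in every maximum clique is dismissed with the unsupported assertion that the structure ``constrains $G$ tightly enough to exhibit the $k$ seagulls directly.'' So the proposal is a plan rather than a proof.

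The one-sentence ``alternative'' you mention at the end is in fact the intended argument, and your preliminary observations already do most of it: your item (i) gives condition (i) of Theorem~\ref{t:seagulls} ($|G\less K|\ge 3\ell$), your item (ii) gives condition (ii) ($\ell$-connectivity of $G\less K$, by the trivial subtraction from $\ceil{n/2}$-connectivity of $G$), and condition (iv) (an anti-matching of size $\ell$) follows because otherwise $\omega(G\less K)\ge n-|K|-2(\ell-1)\ge |K|+1$, contradicting the maximality of $K$. What your outline omits entirely is condition (iii), the capacity bound: for odd $n$ one must rule out a clique $L$ of $G\less K$ with $|L|=|K|$ and no vertex of $(G\less K)\less L$ partially attached to $L$; maximality of $K$ forbids vertices complete to $L$, so such an $L$ would make $(G\less K)\less L$ (nonempty, of size $2\ell-1$) a clique anticomplete to $L$, whence $K$ is a cut of size $\ceil{n/2}-\ell<\ceil{n/2}$, a contradiction. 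One must also dispose of the exceptional case $G\less K=K_1+C_5$, $\ell=2$ of Theorem~\ref{t:seagulls} (impossible here since $R(3,4)=9$ would force $\omega(G)\ge 4>3=|K|$). I would recommend completing the argument along these lines rather than repairing the induction.
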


Lemma~\ref{l:highconnected}, a recent result by   Chen and Deng~\cite{cd},  plays a key role in establishing the existence of odd bipartite minors in graphs $G$ with $\alpha(G)\le2$. 

\begin{lem}[Chen and Deng~\cite{cd}]\label{l:highconnected}
Let $G$ be a graph on $n$ vertices with $\alpha(G)\le2$.  If $G$ is not $\ceil{n/2}$-connected, then $oh(G)\ge \ceil{n/2}$. 

\end{lem}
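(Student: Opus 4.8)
The plan is to analyse the structure forced by $\alpha(G)\le 2$ together with the failure of $\ceil{n/2}$-connectivity, and in every case either exhibit a clique of order at least $\ceil{n/2}$ (which is already an odd $K_{\ceil{n/2}}$-expansion, since colouring all of its vertices with one colour makes every pairwise edge monochromatic) or build an odd $K_{\ceil{n/2}}$-expansion by hand. Write $r=\ceil{n/2}$. I would first dispose of two easy cases: if $G$ is disconnected then, since $\alpha(G)\le 2$, $G$ is the disjoint union of exactly two cliques, the larger of which has at least $r$ vertices, so $oh(G)\ge r$; and if $\omega(G)\ge r$ we are done as well. Hence assume $G$ is connected with $\omega(G)\le r-1$, so $G\ne K_n$, and fix a minimum vertex cut $S$ with $1\le|S|=:s\le r-1$.

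Next I record the structure. Since $\alpha(G)\le 2$, $G\less S$ has exactly two components, each a clique; call them $X$ and $Y$ with $a:=|X|\ge b:=|Y|\ge 1$, so $X$ is anticomplete to $Y$. Then $a+b=n-s\ge n-(r-1)\ge r$, while $\omega(G)\le r-1$ gives $a\le r-1$; in particular $1\le r-a\le b$. Moreover every $z\in S$ is complete to $X$ or to $Y$ (otherwise $z$ together with a non-neighbour in $X$ and one in $Y$ would be an independent triple); let $P$ be the set of $z\in S$ complete to $X$ and $Q$ the set complete to $Y$, so $P\cup Q=S$. (By minimality of $S$, every $z\in S$ also has a neighbour in $X$ and in $Y$; this is convenient but not strictly needed.)

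The heart of the argument is the following dichotomy. Colour $X$ with $0$ and $Y$ with $1$. Suppose there is a matching $z_1y_1,\dots,z_{r-a}y_{r-a}$ of $G$ with $z_i\in P$ and $y_i\in Y$; take as branch sets the $a$ singletons of $X$ (coloured $0$) together with the $r-a$ edges $z_iy_i$ (with $z_i$ coloured $0$ and $y_i$ coloured $1$). This is an odd $K_r$-expansion: two $X$-singletons are joined through the clique $X$ (a $0$–$0$ edge), an $X$-singleton $x$ and a pair $z_iy_i$ through the edge $xz_i$ (a $0$–$0$ edge, using $z_i\in P$), two pairs $z_iy_i$ and $z_jy_j$ through $y_iy_j$ (a $1$–$1$ edge, using the clique $Y$), and each tree-edge $z_iy_i$ is bichromatic; so $oh(G)\ge r$. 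Symmetrically, a matching of size $r-b$ between $Q$ and $X$ also yields $oh(G)\ge r$. If neither matching exists, apply K\"onig's theorem to the bipartite subgraph of $G$ between $P$ and $Y$: its maximum matching has size at most $r-a-1<b$, so there is a vertex cover $W_P\cup W_Y$ with $W_P\subseteq P$, $W_Y\subseteq Y$, $|W_P|+|W_Y|\le r-a-1$, and $P\less W_P$ anticomplete to $Y\less W_Y$; picking any $y^*\in Y\less W_Y$, no two vertices of $P\less W_P$ can be non-adjacent (they would form an independent triple with $y^*$), so $P\less W_P$ is a clique and $X\cup(P\less W_P)$ is a clique of order at least $2a+|P|-r+1$. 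Symmetrically one obtains a clique of order at least $2b+|Q|-r+1$. Hence in all cases $oh(G)\ge\max\{r,\;2a+|P|-r+1,\;2b+|Q|-r+1\}$.

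The main obstacle is to verify that this maximum is always at least $r$, that is, that $2a+|P|\ge 2r-1$ or $2b+|Q|\ge 2r-1$. If both fail, summing them and using $|P|+|Q|=s+|P\cap Q|$ and $a+b=n-s$ gives $|P\cap Q|\le 4r-4-2n+s$, which is $s-4$ when $n$ is even and $s-2$ when $n$ is odd; so the only surviving case is that $S$ is fairly large and only a handful of its vertices are complete to both $X$ and $Y$. I expect this to be the genuinely hard part, and I would attack it by iterating the clique-extraction step — replacing $X$ by the larger clique $X\cup(P\less W_P)$ and $Y$ by $Y\cup(Q\less W_Q)$ and re-deriving a separation — and by exploiting the strong restrictions then available: almost every vertex of $S$ lies in exactly one of $P\less Q$, $Q\less P$; any two non-adjacent vertices of $P$ (resp.\ $Q$) missing a common vertex of $Y$ (resp.\ $X$) create an independent triple; and one is free to choose the minimum cut $S$ so as to maximise $a+b$. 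Making every sub-case of this residual situation output either a clique of order at least $r$ or an explicit odd $K_r$-expansion is, I believe, where the real work of the proof lies.
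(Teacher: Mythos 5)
The paper does not actually prove Lemma~\ref{l:highconnected} --- it is imported from Chen and Deng~\cite{cd} --- so there is no in-paper argument to measure yours against; your proposal has to stand on its own, and it does not: it contains a genuine gap, one that you yourself flag. Everything up to the final step is correct. The reduction to a connected graph with $\omega(G)\le r-1$ (where $r=\ceil{n/2}$), the decomposition $V(G)=X\cup Y\cup S$ with $X,Y$ anticomplete cliques and $P\cup Q=S$, the verification that an $(r-a)$-matching from $P$ into $Y$ (or an $(r-b)$-matching from $Q$ into $X$) yields an odd $K_r$-expansion, and the K\"onig step producing cliques of orders $2a+|P|-r+1$ and $2b+|Q|-r+1$ are all sound. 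But the lemma is only proved once you show that at least one of these four exits is always available, and that is precisely what is missing.

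Concretely, when both matchings fail and both extracted cliques have order at most $r-1$, the only consequence you extract is $|P\cap Q|\le s-2$ for $n$ odd (and $\le s-4$ for $n$ even), which is very far from a contradiction: summing the two failed inequalities loses all the information about how $|P|$ and $|Q|$ are distributed, and the surviving parameter range (e.g.\ $s$ close to $r-1$, $a$ and $b$ close to $r/2$, $P$ and $Q$ nearly disjoint of size about $s/2$ each) is large. Nothing you have written rules out such configurations, and the failure of the matchings is combinatorially compatible with $\alpha(G)\le2$ --- two vertices of $P$ with a common non-neighbour in $Y$ merely have to be adjacent to each other, which is exactly what makes $P\setminus W_P$ a clique in your own argument rather than producing a contradiction. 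The proposed remedies (iterating the clique extraction on $X\cup(P\setminus W_P)$ and $Y\cup(Q\setminus W_Q)$, or re-choosing the cut $S$) are not carried out, and it is not clear they terminate in either a clique of order $r$ or an explicit odd $K_r$-expansion. As it stands, your argument establishes the lemma only under the additional hypothesis that one of the two matchings exists or one of the two extracted cliques already has order at least $r$; to complete it you would need the substantive case analysis you defer, or you should consult the actual proof in~\cite{cd}.
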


Utilizing Theorem~\ref{oddalpha2},  Theorem~\ref{t:seagulls2} and Lemma~\ref{l:highconnected}, we next extend Theorem~\ref{t:clique}  to odd clique minors, that is, we prove that Odd Hadwiger's Conjecture holds for graphs  $G$ with $\alpha(G)\le2$ and $\omega(G)$ sufficiently large. We include the   proof here as it is very short. 
\begin{thm}\label{t:oclique}
Let $G$ be a graph on $n$ vertices with $\alpha(G)\le2$.   If 
\[
\omega(G)\ge \begin{cases}
   n/4, & \text{if $n$ is even}\\
   (n+3)/4, & \text{if n is odd,}
\end{cases}
\]
then $oh(G) \ge \chi(G)$.
\end{thm}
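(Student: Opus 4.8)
The plan is to reduce everything to the three results just quoted. By Theorem~\ref{oddalpha2} it is enough to show $oh(G)\ge\ceil{n/2}$, so write $t:=\ceil{n/2}$. If $G$ is not $t$-connected, then $oh(G)\ge t$ by Lemma~\ref{l:highconnected}, so from now on I would assume $G$ is $t$-connected. Let $K$ be a largest clique of $G$. A short parity check shows that the hypothesis on $\omega(G)$ is exactly the inequality $\omega(G)\ge\frac32\ceil{\frac n2}-\frac n2$, which is the lower bound on $|K|$ appearing in Theorem~\ref{t:seagulls2}.

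I would then dispose of the easy case first: if $|K|\ge t$, pick any $t$ vertices of $K$, regard each as a singleton tree, and colour all of them with colour $1$. These trees carry no edges, so the condition that tree edges be bichromatic holds vacuously, and every edge among the chosen vertices lies in the clique and is therefore monochromatic; this is an odd $K_t$-expansion, so $oh(G)\ge t$. Hence we may assume $|K|\le t-1$, in which case $\frac32\ceil{\frac n2}-\frac n2\le|K|\le\ceil{\frac n2}$ and $K$ is still a largest clique, so Theorem~\ref{t:seagulls2} applies and produces $m:=t-|K|$ pairwise disjoint seagulls $S_1,\dots,S_m$ in $G\less K$; write $V(S_j)=\{a_j,b_j,c_j\}$ with $a_jb_j,b_jc_j\in E(G)$ and $a_jc_j\notin E(G)$.

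The main work is to upgrade the branch structure used by Chudnovsky and Seymour to prove Theorem~\ref{t:clique} (the $|K|$ clique vertices together with the $m$ contracted seagulls, which gives an ordinary $K_t$-minor) to an \emph{odd} $K_t$-expansion. I would take the $|K|$ vertices of $K$ as singleton trees and the $m$ seagulls as path-trees, and use the $2$-colouring in which every vertex of $K$ and each pair of endpoints $a_j,c_j$ receive colour $1$, while each seagull centre $b_j$ receives colour $2$. The only tree edges are the $a_jb_j$ and $b_jc_j$, and these are bichromatic, so it remains to exhibit, for each pair of trees, a joining edge that is monochromatic. This is where $\alpha(G)\le2$ does essentially all of the work: whenever $v\notin V(S_j)$, the set $\{a_j,c_j,v\}$ cannot be independent, and since $a_jc_j\notin E(G)$ this forces $v$ to be adjacent to $a_j$ or to $c_j$. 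Applying this with $v\in K$ yields a colour-$1$ edge between an arbitrary clique vertex and an arbitrary seagull; applying it with $v=a_{j'}$ yields a colour-$1$ edge between any two distinct seagulls; and edges inside $K$ are colour-$1$ by construction. Hence all inter-tree edges may be taken monochromatic, so these $t$ trees with this colouring form an odd $K_t$-expansion and $oh(G)\ge t=\ceil{n/2}$; Theorem~\ref{oddalpha2} then gives $oh(G)\ge\chi(G)$.

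I do not anticipate a real obstacle, since the two hard inputs — the seagull-packing theorem and the connectivity lemma — are being cited rather than reproved. The only points requiring care are the arithmetic identity between the stated bound on $\omega(G)$ and $\frac32\ceil{\frac n2}-\frac n2$ for both parities of $n$, so that Theorem~\ref{t:seagulls2} is genuinely applicable, and remembering to treat the degenerate case $|K|\ge\ceil{n/2}$ in which that theorem is vacuous but a bare clique already suffices. The one point that is new relative to the ordinary $K_t$-minor argument is the colouring, and the reason a single uniform colour on $K$ and on all seagull endpoints works is precisely that independence number two forces every vertex outside a seagull to see one of its two endpoints.
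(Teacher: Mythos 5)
Your proof is correct and follows essentially the same route as the paper: reduce to $oh(G)\ge\ceil{n/2}$ via Theorem~\ref{oddalpha2}, dispose of the non-$\ceil{n/2}$-connected case with Lemma~\ref{l:highconnected}, and apply Theorem~\ref{t:seagulls2} to pack $\ceil{n/2}-|K|$ seagulls alongside a maximum clique. The only difference is that you spell out the details the paper leaves implicit, namely the parity check identifying the clique bound with $\frac32\ceil{\frac n2}-\frac n2$ and the explicit two-colouring certifying that the clique plus seagulls form an \emph{odd} $K_{\ceil{n/2}}$-expansion.
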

 
\begin{proof} By Theorem~\ref{oddalpha2}, it suffices to show that $oh(G) \ge \ceil{n/2}$. Suppose $oh(G) <\ceil{n/2}$. By Lemma~\ref{l:highconnected}, $G$ is $\ceil{\frac n2}$-connected. Let $K$ be a largest clique in $G$. It can be easily checked that 
\[\frac32\ceil{\frac n2}-\frac n2\le |K|  \le \ceil{\frac n2}.\]
By Theorem~\ref{t:seagulls2}, we see that $G\less K$ contains $\ceil{n/2}-|K|$ pairwise disjoint seagulls. Such seagulls, together with $K$, yield an odd clique minor of order $\ceil{n/2}$ in $G$, which contradicts to the assumption that $oh(G) <\ceil{n/2}$.
\end{proof}

 B. Thomas and the second author~\cite{st} in 2017  proved that Odd Hadwiger's Conjecture holds for    $\{C_4,   \dots, C_{2\alpha(G)}\}$-free graphs.

\begin{thm}[Song and B. Thomas~\cite{st}]\label{forbiddenholes}
Every   $\{C_4,    \dots, C_{2\alpha(G)}\}$-free graph $G$   satisfies   $oh(G)\ge \chi(G)$.
\end{thm}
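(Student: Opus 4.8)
The plan is to argue by contradiction from a counterexample $G$ with $|G|$ minimum; write $\alpha:=\alpha(G)$ and $t:=\chi(G)$, so that $oh(G)<t$. Being $\{C_4,\dots,C_{2\alpha(G)}\}$-free is inherited by induced subgraphs, since the independence number cannot increase, so minimality gives $oh(H)\ge\chi(H)$ for every proper induced subgraph $H$ of $G$. Consequently, if some $v$ satisfied $\chi(G\less v)=\chi(G)$ then $oh(G)\ge oh(G\less v)\ge\chi(G\less v)=t$, a contradiction; hence $G$ is $t$-vertex-critical and $\delta(G)\ge t-1$. Moreover, if $G$ had a simplicial vertex $v$, then $N[v]$ would be a clique of order at least $t$, and a $K_t$-subgraph with all its vertices given a single colour is an odd $K_t$-expansion; so $G$ has no simplicial vertex.

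Next I would fix the structure. An induced cycle $C_\ell$ has $\alpha(C_\ell)=\lfloor\ell/2\rfloor\le\alpha$, so every hole of $G$ has length at most $2\alpha+1$; since the forbidden lengths are $4,\dots,2\alpha$, every hole of $G$ has length exactly $2\alpha+1$, and in particular $G$ is even-hole-free. If $G$ has no hole at all it is chordal, hence perfect, so $t=\chi(G)=\omega(G)$ and a $K_t$-subgraph is again an odd $K_t$-expansion, a contradiction; thus $G$ has an induced cycle $C=c_0c_1\cdots c_{2\alpha}$, and $C\less\{c_0,c_1\}$ is an induced copy of $P_{2\alpha-1}$, so $G$ contains one such. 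I would also record two local facts, each immediate from the hole structure: for every edge $uv$ of $G$, every induced $u$-$v$ path has length $1$ or $2\alpha$; and for every non-edge $uv$ of $G$, the set $N(u)\cap N(v)$ is a clique, since two non-adjacent common neighbours of $u$ and $v$ would form an induced $C_4$.

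The engine of the argument is the following recipe for odd clique minors in this setting. Let $Q$ be a clique of $G$ and let $P^{(1)},\dots,P^{(m)}$ be pairwise-disjoint induced copies of $P_{2\alpha-1}$ in $G$, each disjoint from $Q$. In each $P^{(i)}$ the $\alpha$ vertices in odd position are pairwise non-adjacent in $G$ (the copy is induced), so they form a \emph{maximum} independent set of $G$; colour those $\alpha$ vertices white and the remaining $\alpha-1$ vertices of $P^{(i)}$ black, and colour every vertex of $Q$ white. Then the $|Q|$ singletons from $Q$ together with the path-trees $P^{(1)},\dots,P^{(m)}$ form an odd $K_{|Q|+m}$-expansion: any two singletons are joined by a white-white clique edge; each singleton $\{q\}$ is joined to each $P^{(i)}$ by a white-white edge, because the white set of $P^{(i)}$ is a maximum independent set and therefore dominates $q$; and any two $P^{(i)}$ and $P^{(j)}$ are joined by a white-white edge, because the white set of $P^{(i)}$ dominates every white vertex of $P^{(j)}$. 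Taking $Q$ to be a largest clique of $G$ and applying this with $m=t-|Q|$, the theorem reduces to the statement that $G\less Q$ contains at least $t-|Q|$ pairwise-disjoint induced copies of $P_{2\alpha-1}$ (the case $|Q|\ge t$ being immediate).

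I expect this last statement to be the main obstacle; it is, for the present class, the analogue of the disjoint-seagulls result of Chudnovsky and Seymour (Theorem~\ref{t:seagulls2}), to which it specialises when $\alpha=2$, and like that result it may require a preliminary reduction to a highly connected case, in the spirit of Lemma~\ref{l:highconnected}. The plan of attack is an exchange argument: starting from the induced $P_{2\alpha-1}$ supplied by $C$, take a largest pairwise-disjoint family of induced copies of $P_{2\alpha-1}$ in $G\less Q$, assume it has size $m<t-|Q|$, and derive a contradiction by examining the neighbourhoods of vertices missed by the family — using $\delta(G)\ge t-1$ to force many neighbours, the inequality $|G|\le\alpha t$ together with $G$ being even-hole-free (hence $\chi(G)\le 2\omega(G)-1$) to make the counting tight, and the two local facts to reroute paths — so that every obstruction yields either a larger family or one of the forbidden holes $C_4,\dots,C_{2\alpha}$, a contradiction that completes the proof.
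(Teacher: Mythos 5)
First, a point of reference: the paper does not prove Theorem~\ref{forbiddenholes} at all --- it is imported verbatim from Song and B.~Thomas~\cite{st} --- so there is no in-paper argument to measure you against. Judged on its own terms, your setup is fine (heredity of the hypothesis, $\chi$-criticality, $\delta(G)\ge\chi(G)-1$, every hole having length exactly $2\alpha+1$, the chordal case), and your ``engine'' is correct and genuinely the right idea: the $\alpha$ odd-position vertices of an induced $P_{2\alpha-1}$ form a maximum, hence dominating, independent set, so colouring them white forces a white--white edge from that path to every disjoint white vertex and to the white class of every other such path, yielding an odd $K_{|Q|+m}$-expansion. (One small slip: your first ``local fact'' is misstated --- an induced $u$--$v$ path for an edge $uv$ has length $1$ by definition; you presumably mean that $uv$ together with an otherwise induced $u$--$v$ path closes into a triangle or a $(2\alpha+1)$-hole --- but nothing downstream uses it.)

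The genuine gap is the step you yourself flag as ``the main obstacle'': that $G\less Q$ contains $\chi(G)-\omega(G)$ pairwise disjoint induced copies of $P_{2\alpha-1}$. This is not a loose end; it is essentially the whole content of the theorem, and you offer only an announced exchange argument rather than a proof. Nothing like Theorem~\ref{t:seagulls} is known for induced paths longer than $P_3$, and even in the $\alpha=2$ case the Chudnovsky--Seymour seagull theorem requires connectivity, capacity, and anti-matching hypotheses that you have not verified for $G\less Q$ (compare the work needed in Claim~5 of the proof of Theorem~\ref{main}, which does exactly this verification for one very specific value of $\ell$). Even the crude counting feasibility of your packing claim rests on $\chi(G)\le 2\omega(G)$, which you invoke via the nontrivial $\chi\le 2\omega-1$ bound for even-hole-free graphs without proof. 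The actual argument of~\cite{st} appears to proceed by a different structural route --- note that the same source supplies Lemma~\ref{inflation} on inflations of odd cycles, which suggests a reduction of the critical case to that configuration rather than a general induced-path packing theorem. As it stands, your proposal is a correct reduction plus a plausible but unproven (and, for $\alpha\ge 3$, quite ambitious) packing conjecture, so it does not constitute a proof.
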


Theorem~\ref{forbiddenholes} implies that  Odd Hadwiger's Conjecture holds for $C_4$-free graphs with independence number at most two. We define the   \dfn{kite} to be the graph obtained from $K_4^-$ by adding a new vertex adjacent to a vertex of degree two in $K_4^-$. Our goal is to provide more evidence for Conejcture~\ref{c:oddalpha2} by extending the results of Plummer, Stiebitz and Toft~\cite{pst}  and Kriesell~\cite{kri} to odd minors. In contrast to the construction of clique minors, the primary challenge lies in the fact that a dominating edge is insufficient for constructing the desired odd clique minor, where an edge $xy$ in a graph $G$ is \dfn{dominating} if every vertex in $V(G)\less\{x,y\}$ is adjacent to $x$ or $y$.  The main purpose of our paper is to prove that 
  Odd Hadwiger's Conjecture holds for $H$-free graphs $G$ with $\alpha(G)\le2$, where $H$ is given in Theorem~\ref{main}.    

\begin{thm}\label{main}
Let $G$ be an $H$-free graph  on $n$ vertices with  $\alpha(G)\leq 2$, where    $\alpha(H)\le 2$ and $H$ is an induced subgraph of  
 $K_1 + P_4$,  $K_2+(K_1\cup K_3)$,  $K_1+(K_1\cup K_4)$, $K_7^-$, $K_7$,  or the     kite graph. 
Then  $oh(G)\ge   \chi(G)$.
\end{thm}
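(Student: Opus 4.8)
The plan is to reduce, via Theorem~\ref{oddalpha2}, to showing $oh(G)\ge\lceil n/2\rceil$ for every $H$-free graph $G$ with $\alpha(G)\le 2$, and then to construct the odd clique minor directly. By Lemma~\ref{l:highconnected} we may assume $G$ is $\lceil n/2\rceil$-connected; in particular $\delta(G)\ge \lceil n/2\rceil$, so $\overline G$ has maximum degree at most $\lfloor n/2\rfloor-1$ and is triangle-free (since $\alpha(G)\le 2$). The strategy mirrors the clique-minor arguments of Plummer--Stiebitz--Toft and Kriesell: find a clique $K$ in $G$ and a collection of $\lceil n/2\rceil-|K|$ pairwise disjoint seagulls (induced $P_3$'s) in $G\less K$, and then observe that each seagull, together with the edges joining it to $K$ and to the other seagulls, yields one more branch set of an odd $K_{\lceil n/2\rceil}$-expansion. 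The two-coloring is the key extra demand over the non-odd case: color $K$ (a single-vertex tree each) all one color, say white; for each seagull $a\,$--$\,b\,$--$\,c$ we need the tree to be a path whose edges are bichromatic, so color $b$ black and $a,c$ white, and then every edge from $\{a,c\}$ to $K$ is monochromatic (white--white) while edges from $b$ are white--black; this forces us to insist each seagull sends at least one edge from $\{a,c\}$ to every other branch set and, between two seagulls, that we can pick a monochromatic connecting edge. This is exactly why a dominating edge no longer suffices and why we need the $\alpha(G)\le 2$ structure: any two vertices in $G$ with $\alpha(G)\le 2$ are either adjacent or have a common neighbor, giving plenty of slack to route connections in the correct color.

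The core of the argument is therefore a case analysis on $H$, handled in decreasing order of how much it restricts $G$. First I would dispose of the sparse cases: if $H=K_1+P_4$ (equivalently $G$ has no induced $K_1+P_4$), then by Ramsey-type reasoning the complement $\overline G$ is very structured (its components are small), and in fact $G$ is a join of small pieces, so Theorem~\ref{oddalpha2} plus induction on the join factors finishes immediately — mimic the disconnected-complement step in the proof of Theorem~\ref{oddalpha2}. The cases $H=K_2+(K_1\cup K_3)$ and $H=K_1+(K_1\cup K_4)$ are next: forbidding these induced subgraphs, together with $\alpha(G)\le 2$, forces $\omega(G)$ to be large (because an anticomplete pair of a vertex and a $K_3$ or $K_4$ is excluded, so large independent-ish structures in $\overline G$ collapse), and then Theorem~\ref{t:oclique} applies directly. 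The cases $H\in\{K_7^-,K_7\}$ are similar in spirit: bounded clique number in an $\alpha\le 2$ graph bounds $|G|$ (Ramsey number $R(3,8)$), reducing to finitely many graphs, each of which can be checked — or, better, one shows $\omega(G)\ge n/4$ outright from $\omega(G)\le 6$ being impossible on $n$ vertices with $\alpha\le 2$ once $n$ is moderately large, and handles the small $n$ by hand. The kite case is the genuinely new one and is where I expect the real work to be.

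For the kite, the goal is still a clique $K$ plus disjoint seagulls covering all of $V(G)$ with the correct coloring. Kite-freeness says: no induced $K_4^-$ with a pendant attached to a degree-two vertex. I would first choose $K$ to be a maximum clique; if $|K|\ge \lceil n/2\rceil$ we are done, so assume $|K|<\lceil n/2\rceil$ and let $R=V(G)\less K$ with $|R|=n-|K|>\lfloor n/2\rfloor$. Using $\alpha(G)\le 2$, $G[R]$ has independence number at most two as well, and maximality of $K$ forces every vertex of $R$ to miss some vertex of $K$. I want to partition $R$ into seagulls; since $|R|$ may not be divisible by $3$ the count $\lceil n/2\rceil-|K| = |R|-\lfloor n/2\rfloor$ of seagulls I actually need is much smaller than $|R|/3$, so I have room: it suffices to find $\lceil n/2\rceil-|K|$ \emph{disjoint} seagulls in $G[R]$, not a partition, and absorb the remaining $R$-vertices into $K$'s branch set by adding them with edges to $K$ (they can be leaves of the trees making up $K$). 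Finding one seagull is easy whenever $G[R]$ is not a disjoint union of cliques; kite-freeness plus $\alpha\le 2$ is used to iterate this: removing a seagull from $R$ cannot suddenly make the rest cliquey, because a kite is precisely the small obstruction that would otherwise appear. Getting the coloring right across all pairs of seagulls and between seagulls and $K$ is the final bookkeeping step, and \emph{this} — certifying that the seagulls can be simultaneously two-colored consistently with a single global partition of $V(G)$ into white/black so that all inter-tree edges are monochromatic — is the main obstacle; I would attack it by choosing all seagulls' midpoints black and everything else white, then verifying, via $\alpha(G)\le2$ and the excluded induced subgraph, that for each pair of branch sets there is a white--white connecting edge (an edge between two non-midpoints, or between a non-midpoint and $K$), which is a local Ramsey computation in each case.
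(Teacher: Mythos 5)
Your high-level frame (reduce via Theorem~\ref{oddalpha2} to $oh(G)\ge\lceil n/2\rceil$, then build an odd clique minor from a clique plus disjoint seagulls with midpoints colored black) matches the paper's, and the coloring ``bookkeeping'' you flag as the main obstacle is in fact automatic: the two endpoints of a seagull are non-adjacent, so by $\alpha(G)\le 2$ every other branch set meets $\{a,c\}$ with a white--white edge. The real content is the case analysis, and there your proposal has concrete gaps. Most seriously, the $K_1+P_4$ case: your claim that $K_1+P_4$-freeness with $\alpha(G)\le 2$ forces $\overline G$ to have small components, so that $G$ is a join and the disconnected-complement step of Theorem~\ref{oddalpha2} applies, is false --- $C_5$ and all its inflations are $(K_1+P_4)$-free with $\alpha=2$ and have connected complement. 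The paper's proof of this case consists precisely of showing that a minimal counterexample is an inflation of $C_5$ and then invoking Lemma~\ref{inflation}, which your argument never reaches.

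The other cases are asserted rather than proved. For $K_2+(K_1\cup K_3)$ and $K_1+(K_1\cup K_4)$ you claim the hypothesis ``forces $\omega(G)$ to be large'' so that Theorem~\ref{t:oclique} applies, but no such implication is established (and the paper deliberately avoids Theorem~\ref{t:oclique}); the actual argument first proves, for a minimal counterexample, that $\omega(G)\le\lceil n/2\rceil-7$ and $|B|\ge 13$ for the common neighborhood $B$ of a non-edge --- this requires the seagull-packing Theorem~\ref{t:seagulls} together with $R(3,5)=14$ and $R(3,7)=23$ --- and then derives $|B|\le 9$ by an edge count, a contradiction. For $K_7^-$ and $K_7$, ``reduce to finitely many graphs, each of which can be checked'' is not a feasible proof (there are astronomically many graphs on up to $R(3,8)-1$ vertices); the paper instead pins down $|B|=13$, $n=27$ and kills the case with a parity/regularity contradiction. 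The kite case in the paper likewise rests on the $|B|\ge 13$ machinery and a structural analysis of how $B$ attaches to $C$, none of which appears in your sketch. In short, the skeleton is right but every load-bearing step is missing or, in the $K_1+P_4$ case, based on a false structural claim.
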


We prove Theorem~\ref{main} in Section~\ref{s:main}. Our proof does not rely on Theorem~\ref{t:oclique} in the hope that our method will shed some light on obtaining more evidence for Odd Hadwiger's Conjecture.  Some open problems are listed in Section~\ref{s:remarks}.  We end this section by 
  first list some known results and then prove two lemmas that shall be
applied in the proof of  Theorem~\ref{main}.

\begin{lem}[Song and B. Thomas~\cite{st}]\label{inflation}
Let $G$ be an inflation of an odd cycle $C$, that is, $G$ is  is obtained from $C$ by replacing each vertex  of $C$ by a clique of order at least one and two such cliques are complete to each other if their corresponding vertices in $C$ are adjacent. Then   $oh(G) \geq \chi(G)$.
\end{lem}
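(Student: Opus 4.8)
The plan is to pin down $\chi(G)$ accurately enough and then exhibit an odd $K_{\chi(G)}$-expansion built around a maximum clique together with a family of pairwise disjoint induced paths. Write $C=C_{2k+1}$ and let $V_1,\dots,V_{2k+1}$ be the cliques of the inflation in cyclic order (indices mod $2k+1$), with $n_i:=|V_i|\ge 1$. If $2k+1=3$ then $G$ is a complete graph and $oh(G)=|G|=\chi(G)$, so assume $2k+1\ge 5$; then $C$ is triangle-free, so every clique of $G$ lies inside some $V_i\cup V_{i+1}$, and hence $\omega(G)=\max_i(n_i+n_{i+1})=:\omega$. Let $\mu:=\min_i n_i$, attained at a part $V_p$.

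The first step is the bound $\chi(G)\le \omega+\mu$. Deleting $V_p$ replaces the underlying cycle by the path $C-v_p$, so $G-V_p$ is an inflation of a path; an induced cycle of such an inflation projects to an induced cycle of the base path, so $G-V_p$ is chordal and therefore perfect, giving $\chi(G-V_p)=\omega(G-V_p)\le\omega$. Colouring $G-V_p$ optimally and giving the clique $V_p$ a disjoint block of $\mu$ fresh colours yields
\[\chi(G)\le \chi(G-V_p)+|V_p|\le \omega+\mu.\]

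Now set $t:=\chi(G)$ and construct an odd $K_t$-expansion. Choose $q$ with $n_q+n_{q+1}=\omega$, let $K:=V_q\cup V_{q+1}$, make the $\omega$ vertices of $K$ into one-vertex trees, and colour all of $K$ with colour $1$. The remaining parts $V_{q+2},V_{q+3},\dots,V_{q-1}$ form an inflation of a path on the odd number $2k-1$ of parts; colour every vertex of $V_{q+2},V_{q+4},\dots,V_{q-1}$ with colour $1$ and every vertex of $V_{q+3},V_{q+5},\dots,V_{q-2}$ with colour $2$. Since $t\le\omega+\mu$ by the previous step and $\mu\le\min\{n_j:j\notin\{q,q+1\}\}$, we have $t-\omega\le\min\{n_j:j\notin\{q,q+1\}\}$, so we may pick $t-\omega$ pairwise vertex-disjoint transversals of $V_{q+2},\dots,V_{q-1}$, the $r$-th one taking the $r$-th vertex from each of these parts; each transversal induces a path on $2k-1$ vertices coloured $1,2,1,\dots,1$ (only consecutive parts are adjacent, and $V_{q-1},V_{q+2}$ are not), and these are the remaining $t-\omega$ trees. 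No tree contains a monochromatic edge. For the connecting edges: two singletons of $K$ are adjacent and both have colour $1$; a singleton of $V_q$ is adjacent to the colour-$1$ vertex of each path lying in $V_{q-1}$, and a singleton of $V_{q+1}$ is adjacent to the colour-$1$ vertex of each path lying in $V_{q+2}$; and two distinct paths contain distinct but adjacent colour-$1$ vertices of $V_{q+2}$. Hence every two of the $t$ trees are joined by a monochromatic edge, so $oh(G)\ge t=\chi(G)$.

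The one point that needs thought is the choice of gadgets: we anchor at a maximum clique rather than at the whole cycle, and attach transversals of the complementary chain of parts. Oddness enters exactly here, through the fact that this chain has an odd number $2k-1$ of parts, so its alternating $2$-colouring shows colour $1$ at both ends and therefore matches the colour of $K$; a naive construction distributing all vertices among short pieces runs into parity obstructions instead. The inequality $\chi(G)\le\omega(G)+\mu$ — equivalently, that only $t-\omega\le\mu$ extra gadgets are needed — is immediate from perfectness of the path-inflation $G-V_p$, and after that only the routine verification above remains.
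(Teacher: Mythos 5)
Your argument is correct, and I could verify it line by line: the reduction of the triangle case to a complete graph, the identification $\omega(G)=\max_i(n_i+n_{i+1})$ for $2k+1\ge 5$, the bound $\chi(G)\le\omega(G)+\mu$ via chordality (indeed perfection) of the path-inflation $G\setminus V_p$, and the final expansion consisting of the $\omega$ singletons of $K=V_q\cup V_{q+1}$ together with $\chi(G)-\omega\le\mu$ disjoint transversal paths through the remaining $2k-1$ parts. The parity point is handled exactly where it must be: the complementary chain has an odd number of parts, so its alternating $2$-colouring places colour $1$ at both ends, matching the colour of $K$, and all inter-tree edges (within $K$, from $K$ to the end-vertices of the paths in $V_{q-1}$ and $V_{q+2}$, and between two paths inside the clique $V_{q+2}$) are monochromatic while every tree edge is bichromatic. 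Note that the paper itself does not prove this lemma --- it is imported from Song and B.~Thomas~\cite{st} --- so there is no in-paper argument to compare against; your proof is a self-contained substitute. The original source likewise builds the odd expansion from a maximum clique spanning two consecutive parts plus induced paths through the remaining parts, but it routes the counting through the exact formula $\chi(G)=\max\{\omega(G),\lceil |G|/k\rceil\}$ for inflations of $C_{2k+1}$; your weaker bound $\chi(G)\le\omega(G)+\mu$ is easier to establish (one application of perfection of an interval graph) and is all the construction actually needs, which is a modest simplification.
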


For a nonempty clique $K$ in a graph $G$, let $K^*$  be the set of vertices $v \in V (G) \less K$ such that   $v$ is neither complete nor anti-complete to $K$.   We define the \dfn{capacity of $K$}  to be 
$(|G|+|K^*|-|K|)/2$.  

\begin{thm}[Chudnovsky and Seymour]\label{t:seagulls}  Let $G$ be a graph with $\alpha(G)\le2$ and let $\ell\ge0$  be an integer such that
 $G\ne K_1+C_5$  when $\ell=2$. Then $G$ has $\ell$ pairwise disjoint seagulls  if and only if
\begin{enumerate}[(i)]
\item  $|G|\ge 3\ell$,
\item $G$ is $\ell$-connected,
\item every clique of $G$ has capacity at least $\ell$ and
\item  $G$ admits an anti-matching of cardinality $\ell$, that is,  $\overline{G}$ admits a   matching of cardinality $\ell$.
\end{enumerate}
\end{thm}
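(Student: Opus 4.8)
The plan is to translate the entire statement into the complement $\overline G$, which is triangle-free because $\alpha(G)\le 2$. A seagull $\{a,b,c\}$ of $G$ with centre $b$ (so $ab,bc\in E(G)$ and $ac\notin E(G)$) corresponds in $\overline G$ to an edge $ac$ together with a vertex $b$ that is nonadjacent in $\overline G$ to both ends of that edge. Thus packing $\ell$ disjoint seagulls in $G$ is equivalent to choosing a matching $\{e_1,\dots,e_\ell\}$ in $\overline G$ together with $\ell$ distinct ``centres'' $b_1,\dots,b_\ell$, disjoint from the matched vertices, such that each $b_i$ is anticomplete in $\overline G$ to $e_i$. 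Condition (iv) is exactly the existence of the matching $\{e_i\}$, so the real content of the theorem is the simultaneous choice of compatible centres.

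The easy (``only if'') direction is to check that (i)--(iv) are forced. Conditions (i) and (iv) are immediate, since $\ell$ disjoint seagulls occupy $3\ell$ vertices and their $\ell$ pairs of nonadjacent ends form a matching of size $\ell$ in $\overline G$. For (iii) I would fix a clique $K$ and split $V(G)\setminus K$ into the set $A$ complete to $K$, the set $K^*$ mixed on $K$, and the set $B$ anticomplete to $K$, so that the capacity equals $(|A|+2|K^*|+|B|)/2$; assigning weight $1$ to $A\cup B$, weight $2$ to $K^*$, and weight $0$ to $K$, every seagull carries total weight at least $2$. Indeed a seagull meets the clique $K$ in at most two vertices (its two ends are nonadjacent, hence not both in $K$), and in the extremal case where the centre and one end lie in $K$, the remaining end is adjacent to that centre but nonadjacent to the other end, so it is mixed on $K$ and alone contributes weight $2$. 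Summing over the disjoint seagulls gives $2\ell\le |A|+2|K^*|+|B|$. For (ii), if $G$ is not $\ell$-connected then (being non-complete, else (iv) fails) it has a cutset $C$ with $|C|<\ell$; since $\alpha(G)\le2$ the graph $G\setminus C$ splits into exactly two cliques, and one checks that every seagull must use a vertex of $C$, giving at most $|C|<\ell$ disjoint seagulls.

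The hard direction is sufficiency, which I would establish by induction on $\ell$, using the complement picture to locate the seagulls. Fix a matching $\{e_1,\dots,e_\ell\}$ realising (iv), with free-vertex set $F$ (of size at least $\ell$ by (i)), and form the bipartite \emph{centre-assignment} graph with the $e_i$ on one side, $F$ on the other, and $e_i\sim f$ whenever $f$ is anticomplete to $e_i$ in $\overline G$ (equivalently, the two ends of $e_i$ together with $f$ form a seagull of $G$). A matching saturating the $e_i$ produces all $\ell$ seagulls at once. The crux is to show that Hall's condition cannot fail without violating a hypothesis: if some family of $k$ edges has fewer than $k$ common-eligible centres, then each remaining free vertex is, for every one of these $k$ nonadjacent pairs, adjacent in $G$ to exactly one end — here $\alpha(G)\le2$ is essential, since a free vertex nonadjacent to both ends of a pair would create an independent triple. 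I would convert this ``selection'' structure into a clique $K$ of $G$ of capacity smaller than $\ell$, contradicting (iii), after first using the freedom to re-choose the matching (augmenting along alternating paths in $\overline G$) to bring the deficient set into a canonical form.

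The main obstacle is precisely this conversion of a Hall deficiency into a violated capacity or connectivity bound, because the centre-assignment is genuinely three-dimensional: an optimal seagull packing need not extend any single maximum anti-matching, so the argument must interleave the rerouting of the matching with the counting behind (iii). The one genuine exception, $G=K_1+C_5$ with $\ell=2$, is exactly where this conversion breaks down — there all four quantities are at least $2$, yet no two disjoint seagulls exist, since every seagull either uses the apex or consists of two nonconsecutive vertices of the $5$-cycle with a common neighbour, and no two such triples are disjoint. I therefore expect the proof to culminate in a structural analysis of the ``tight'' configurations, namely cliques of capacity exactly $\ell$ and cutsets of size exactly $\ell$, isolating $K_1+C_5$ as the sole obstruction and constructing the seagulls by hand in every other tight case; this final case analysis is the step I anticipate to be the most delicate.
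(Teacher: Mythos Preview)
The paper does not contain a proof of this theorem. It is stated as a known result of Chudnovsky and Seymour (reference~\cite{cs}, ``Packing seagulls''), listed among the preliminaries with no argument given. There is therefore nothing in the present paper to compare your attempt against; the authors simply invoke the result as a black box in the proof of Claim~\ref{ell}.

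On the substance of your sketch: your ``only if'' direction is essentially complete and correct. The weight argument for (iii) is clean, and the connectivity argument for (ii) is right once one notes that a seagull contained in $G\setminus C$ would have to lie in a single clique component (impossible) or span both (also impossible, since the centre cannot reach the other side).

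Your ``if'' direction, however, is a plan and not a proof. You correctly identify the crux --- turning a Hall deficiency in the centre-assignment graph into a clique of small capacity or a small cutset --- but you do not carry it out, and you explicitly say you ``expect'' the proof to culminate in a structural case analysis. That expectation is accurate: the Chudnovsky--Seymour argument is substantially more involved than a single Hall/augmenting-path step, and the interaction between re-choosing the anti-matching and the capacity bound is delicate. In particular, fixing one maximum anti-matching and hoping Hall's condition holds for \emph{that} matching's free vertices does not work in general; the proof in~\cite{cs} does not proceed along those lines. So as written, the sufficiency direction has a genuine gap: the central reduction is asserted rather than proved.
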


\begin{lem}\label{L1}
Let $G$ be a graph on $n$ vertices.
If $\delta(G) \geq n-3$, then $ oh(G)\ge \lceil n/ \alpha(G) \rceil$.
\end{lem}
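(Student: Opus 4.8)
The plan is to analyze the structure of a graph $G$ on $n$ vertices with $\delta(G)\ge n-3$ by looking at its complement $\overline G$, which has $\Delta(\overline G)\le 2$ and is therefore a vertex-disjoint union of paths and cycles (including isolated vertices and isolated edges). First I would dispose of trivial cases: if $\alpha(G)=1$ then $G=K_n$ and $oh(G)=n=\lceil n/\alpha(G)\rceil$. So assume $\alpha(G)\ge 2$; since $\Delta(\overline G)\le 2$ and any independent set of $G$ is a clique of $\overline G$, we have $\alpha(G)\le 3$, so $\alpha(G)\in\{2,3\}$. The target is then to build an odd $K_t$-expansion with $t=\lceil n/2\rceil$ when $\alpha(G)=2$ and $t=\lceil n/3\rceil$ when $\alpha(G)=3$.

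The key idea is that each connected component of $\overline G$ that is a path or cycle decomposes naturally into the branch sets of an odd clique minor. For a single edge or isolated vertex of $\overline G$ (i.e. a pair of non-adjacent vertices, or a dominating-type vertex of $G$), the corresponding branch set is just that vertex or that pair. For a longer path or an odd cycle in $\overline G$, one can take the branch sets to be consecutive pairs of vertices along the path/cycle: two vertices that are consecutive in $\overline G$ are \emph{non-adjacent} in $G$, so that pair is not a $K_2$ in $G$ and can be two-colored bichromatically within its (trivial) tree, while two vertices in different branch sets coming from the same path/cycle, if adjacent in $G$, give monochromatic cross-edges provided the coloring is chosen consistently. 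Concretely, I would two-color $V(G)$ as follows: color the vertices of each path or cycle component of $\overline G$ alternately along the path/cycle, and give each branch set (a consecutive non-adjacent pair) one vertex of each color; then a tree-edge (the missing-in-$G$ pair, which is actually not an edge — so trees are single edges $xy$ with $x,y$ non-adjacent... ) — here I must be careful: a branch set must induce a \emph{connected} subgraph of $G$, so a non-adjacent pair cannot be a single tree. The fix: pair up a non-adjacent pair $\{x,y\}$ of $\overline G$ together with the fact that in $G$ every other vertex is adjacent to $x$ or to $y$; but more simply, for a path $P=v_1v_2\cdots v_k$ in $\overline G$, take branch sets $\{v_1,v_3\}$? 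No — $v_1v_3$ need not be an edge of $G$ either. The correct decomposition uses the structure that $\overline G$ has max degree $2$: group along each path/cycle into \emph{triples} when $\alpha(G)=3$ (each triple being an independent set of $G$, hence a clique in $\overline G$ — impossible in a path) — so instead the branch sets must be chosen so that within a component of $\overline G$ consecutive pairs $\{v_{2i-1},v_{2i}\}$ are edges of $G$ (since in a path $v_{2i-1}v_{2i}$ \emph{is} an edge of $\overline G$, hence not of $G$ — again problematic). I would therefore instead take branch sets $\{v_{2i}, v_{2i+1}\}$ shifted appropriately so each branch set is an \emph{edge of $G$}: in a path of $\overline G$, $v_jv_{j+1}\notin E(G)$ but $v_jv_{j+2}\in E(G)$ whenever $j+2\le k$ (since $\overline G$ has no such edge by max-degree-2 along a path? not quite — need $v_j,v_{j+2}$ non-adjacent in $\overline G$, which holds since $\overline G$ is exactly a path). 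So the right grouping is along the \emph{square} of the path; I will make this precise.

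The main obstacle is this bookkeeping at the ends of odd-length components and at cycles of $\overline G$: one must verify (a) each branch set induces a connected (indeed, edge- or single-vertex) subgraph of $G$; (b) every two branch sets have a $G$-edge between them — this follows because two branch sets in different $\overline G$-components are completely joined in $G$, and two branch sets in the same component are joined by a $G$-edge since a branch set spans $2$ (or $3$) consecutive-ish vertices and $\overline G$ restricted to a path/cycle cannot make all those pairs non-adjacent; and (c) the two-coloring makes all tree-edges bichromatic (automatic, since a tree here is a single $G$-edge, so I color its two endpoints oppositely) and all cross-edges monochromatic (this is the delicate part — I would choose, for each branch set, a designated "color" and then argue that the $G$-edges between branch sets always connect like-colored vertices, using a consistent alternating pattern along each path/cycle of $\overline G$ and the fact that any two distinct branch sets from different components can be aligned freely). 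Counting the branch sets gives $\lceil n/2\rceil$ when $\alpha(G)=2$ (complement is a union of single edges and isolated vertices, possibly one path), and $\lceil n/3\rceil$ when $\alpha(G)=3$ (complement must contain a triangle or a $P_3$-free... ), which equals $\lceil n/\alpha(G)\rceil$ in each case. The odd-cycle case can alternatively be handled by invoking \cref{inflation}, since when $\overline G$ is a single odd cycle, $G$ is precisely an inflation of the complementary odd cycle. I expect verifying the monochromatic-cross-edge condition across components of $\overline G$, and the endpoint corrections for odd components, to be the only real work.
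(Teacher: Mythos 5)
Your opening moves coincide with the paper's ($\Delta(\overline G)\le 2$, so $\overline G$ is a disjoint union of paths and cycles and $\alpha(G)\le 3$), but after that the proposal never settles on a construction, and the gap sits exactly where the lemma's content lies. Two of the three cases are in fact immediate and you leave both dangling: if $\alpha(G)=3$, a maximum independent set of $\overline G$ has size at least $\lceil n/3\rceil$ (true for any union of paths and cycles), so $\omega(G)\ge\lceil n/3\rceil$ and a clique already is the required odd minor; if $\alpha(G)=2$ and $\overline G$ has no odd cycle, then $\overline G$ is bipartite and $\omega(G)=\alpha(\overline G)\ge\lceil n/2\rceil$, same conclusion. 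The only hard case is an odd cycle $C=a_1\cdots a_{2k+1}$ in $\overline G$ (with $k\ge2$, since $\alpha(G)=2$ makes $\overline G$ triangle-free): there singletons from a maximum independent set give only $k$ branch sets, one short of $\lceil(2k+1)/2\rceil$, and the missing device is the paper's seagull. One takes $a_1,a_3,\dots,a_{2k-1}$ as singletons plus the induced path $G[\{a_{2k},a_2,a_{2k+1}\}]$ as one extra branch set, colours all singletons and the two leaves $a_{2k},a_{2k+1}$ alike and the centre $a_2$ oppositely, and attaches the rest of $G$ by induction on $n$ (legitimate because each $a_i$ has both of its non-neighbours on $C$, hence is complete to $V(G)\setminus V(C)$). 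Your pairing-into-$G$-edges scheme might be repairable, but you never fix the branch sets, and you explicitly defer the points (within-component connecting edges, parity at path ends, colour consistency) where all the difficulty is concentrated.

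The one concrete fallback you commit to is false: when $\overline G$ is a single odd cycle $C_{2k+1}$ with $k\ge3$, the graph $G=\overline{C_{2k+1}}$ is \emph{not} an inflation of an odd cycle, so Lemma~\ref{inflation} does not apply. For instance $\overline{C_7}$ is $4$-regular on $7$ vertices: an inflation of $C_3$ is complete, the only $7$-vertex inflation of $C_7$ is $C_7$ itself, and a $4$-regular inflation of $C_5$ would force all five parts to have size $7/5$. Only $k=2$ survives, via $\overline{C_5}=C_5$. So the sole nontrivial case of the lemma is covered neither by your main construction (unfinished) nor by your backup (inapplicable).
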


\begin{proof}
Suppose the statement is false, that is, $ oh(G)< \lceil n/ \alpha(G) \rceil$. We choose $G$ with $n$ minimum. Since $\delta(G) \geq n-3$, we see that  $\Delta(\overline{G}) \leq 2$ and $2\le \alpha(G)\le3$. It follows that  $\overline{G}$ consists of disjoint union of   paths and cycles. One can easily check that $oh(G)\ge \omega(G)=\alpha(\overline{G})\geq \lceil n/ 3 \rceil$.   It follows that   $\alpha(G) = 2$.   If $\overline{G}$ contains no odd cycles, then $\overline{G}$ is bipartite and so $oh(G)\ge \omega(G)=\alpha(\overline{G}) \geq \lceil n/ 2 \rceil$, a contradiction. Thus    $\overline{G}$ contains an odd cycle $C$, say,  with vertices $a_1, \ldots, a_{2k+1}$ in order, where $k \geq 2$ (since $\alpha(G)=2$). By the minimality of $n$, we see that  $oh(G \less V(C))\ge  \lceil (n-2k-1)/2\rceil$. Adding the $k$ vertices $a_1,a_3,...,a_{2k-1}$ and the seagull $G[\{a_{2k}, a_2, a_{2k+1}\}]$, we obtain an odd clique minor of order $  \lceil {(n-2k-1)}/{2}\rceil+k+1 \ge \lceil   n/2\rceil$ in $G$, a contradiction.
\end{proof}

\begin{lem}\label{L2}
 Let $G$ be a graph on $n$ vertices with  $\alpha(G) \leq 2$ and $x, y\in V(G)$. If  $xy\notin E(G)$ and $y$ is complete to $N(x)$ in $G$ and $oh(G\backslash\{x,y\})\ge \ceil{n/2}-1$, then $ oh(G)\ge \lceil  n/2 \rceil$.
\end{lem}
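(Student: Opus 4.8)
The plan is to take an odd $K_t$-expansion of $G\setminus\{x,y\}$, where $t=\ceil{n/2}-1$, and enlarge it to an odd $K_{t+1}$-expansion of $G$ by appending the one-vertex branch set $\{y\}$. The first step is to record that, under the hypotheses, $y$ is actually complete to $V(G)\setminus\{x,y\}$: for $z\in V(G)\setminus\{x,y\}$, since $\alpha(G)\le 2$ and $xy\notin E(G)$ the vertex $z$ is adjacent to $x$ or to $y$, and in the former case $z\in N(x)$ forces $zy\in E(G)$ because $y$ is complete to $N(x)$; so $zy\in E(G)$ in either case. Thus $y$ sees every vertex of $H:=G\setminus\{x,y\}$, hence every vertex used by the expansion.

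Write the expansion as vertex-disjoint trees $T_1,\dots,T_t$ in $H$, pairwise joined by an edge, together with a $2$-coloring $c$ of $V(T_1)\cup\dots\cup V(T_t)$ under which every tree edge is bichromatic and, for each pair $T_i,T_j$, the joining edge is monochromatic. To attach $T_{t+1}=\{y\}$ and keep the expansion odd, I need a single color $\sigma$ such that every $T_i$ contains a $\sigma$-vertex: then I set $c(y)=\sigma$ and join $T_{t+1}$ to each $T_i$ by the edge from $y$ to a $\sigma$-vertex of $T_i$ (which exists because $y$ is complete to $V(H)$), and that edge is monochromatic. Every $T_i$ with at least two vertices already contains both colors (any tree edge is bichromatic), so the only possible obstruction is a singleton tree; and the key---and essentially only---observation is that all singleton trees receive the same $c$-color. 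Indeed, if $T_i=\{p\}$ and $T_j=\{q\}$ are singletons, the only edge that could join $T_i$ to $T_j$ is $pq$, so $pq\in E(G)$ is that joining edge and hence $c(p)=c(q)$. So I take $\sigma$ to be the common color of the singleton trees (or $\sigma=1$ if there are none).

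It then remains to assemble the pieces: $T_1,\dots,T_{t+1}$ are vertex-disjoint trees of $G$ since $y\notin V(H)$; every pair $T_i,T_j$ with $i,j\le t$ keeps its old monochromatic joining edge; every pair $T_i,T_{t+1}$ has the monochromatic joining edge described above; $T_{t+1}$ contributes no tree edges; and the rest of $c$ is untouched. Hence this is an odd $K_{t+1}$-expansion of $G$, so $oh(G)\ge t+1=\ceil{n/2}$. The small cases $t\in\{0,1\}$ cause no trouble, since the expansion of $H$ is then empty or a single vertex and the construction still applies. I do not anticipate a genuine difficulty here: the whole argument rests on the single observation that two singleton branch sets cannot be colored incompatibly, after which the parity bookkeeping is automatic. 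Note also that the hypothesis ``$y$ is complete to $N(x)$'' is used only to force $y$ to dominate $V(G)\setminus\{x,y\}$, and the vertex $x$ itself plays no further role in the construction.
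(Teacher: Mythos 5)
Your proposal is correct and follows essentially the same route as the paper: first deduce from $\alpha(G)\le 2$ and the hypothesis that $y$ dominates $V(G)\setminus\{x,y\}$, then conclude $oh(G)\ge oh(G\setminus\{x,y\})+1$. The paper states this last inequality without justification, whereas you carefully verify it (the observation that all singleton branch sets share a color, so a dominating vertex can always be attached monochromatically, is exactly the right detail to supply).
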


\begin{proof}
 Since  $\alpha(G)\le2$ and $y$ is complete to $N(x) $ in $G$, we see that   $y$ is complete to $V(G)\backslash\{x,y\}$  in $G$.  Therefore,  $ oh(G)\ge oh(G\backslash\{x,y\})+1\ge \lceil   n/2\rceil$, as desired.
\end{proof}

\section{Proof of Theorem \ref{main}}\label{s:main}
Let $G$ be an $H$-free graph  on $n$ vertices with  $\alpha(G)\leq 2$. By Theorem~\ref{oddalpha2}, it suffices to prove that  $h(G)\ge \ceil{n/2}$ when $H$ is isomorphic to $K_1 + P_4$,  $K_2+(K_1\cup K_3)$,  $K_1+(K_1\cup K_4)  $, $K_7^-$, $K_7$, or the     kite graph.   Suppose the statement is false. Then $oh(G) < \ceil{n/2}$. We choose $G$ with $n$ minimum.  Then $G$ is connected,  $\omega(G)\le \ceil{n/2}-1$ and $\alpha(G) = 2$.  We next prove several claims. \\

\setcounter{counter}{0}

\noindent {\bf Claim\refstepcounter{counter}\label{degree}  \arabic{counter}.}
$\delta(G)\le n-4$ and $\Delta(G)\le n-3$. 
 \begin{proof}
 By Lemma~\ref{L1}, we have $\delta(G) \leq n - 4$.  Suppose  $\Delta(G)= n-1$.  Let  $x\in V(G)$ with $d_G(x)=n-1$. By the minimality of $n$, $oh(G\less\{x\})\ge \lceil   (n-1)/2\rceil$ and so  $oh(G)\ge oh(G\less\{x\})+1\ge \lceil   n/2\rceil$, a contradiction.  This proves that $\Delta(G)\le n-2$. By Lemma~\ref{L2}, we have   $\Delta(G)\le n-3$.  
 \end{proof}

\noindent {\bf Claim\refstepcounter{counter}\label{nnbr}  \arabic{counter}.} 
For any $v\in V(G)$, $V(G) \less N[v]$  is a clique, and  $2\le n-1-d_G(v)\le \omega(G)\le \ceil{n/2}-1$.
\begin{proof}Since $\alpha(G)=2$, we see that  for any $v\in V(G)$, each pair of vertices in $V(G) \less  N[v]$ are adjacent in $G$. Thus  $V(G) \less N[v]$  is a clique in $G$. By Claim~\ref{degree},  \[2\le n-1-d_G(v)=|V(G) \less N[v]|\le \omega(G)\le \ceil{n/2}-1,\]
  as desired. \end{proof}
\noindent {\bf Claim\refstepcounter{counter}\label{K}  \arabic{counter}.} 
For each clique $K$ in $G$, $K$ is not complete to $V(G)\less K$ in $G$, $G\less K$ is connected and  $\alpha(G\less K)=2$. 
\begin{proof}
 Let $K$ be a clique in $G$. Suppose $K$ is  complete to $V(G)\less K$ in $G$. By the minimality of $n$, $G\less K$ contains an odd clique minor of order $\ceil{(n-|K|)/2}$. This, together with $K$, yields an odd clique minor of order $\ceil{n/2}$ in $G$, a contradiction. 
 Suppose next  $G\less K$ is disconnected. 
 Then $V(G)\less K$ is the disjoint  union of two cliques, say, $A$ and $B$. Then each vertex in $K$ is either complete to $A$ or complete to $B$. Define
\[K':=\{u\in V(G)\less K\mid u \text{ is not complete to } B\} \text{ and } K'':= K\less K'.\]
Then $K'\cup A$ and $K''\cup B$ are disjoint cliques in $G$. Thus $\omega(G)\ge \ceil{n/2}$, a contradiction. This proves that $G\less K$ is connected. Since  $\omega(G)<\ceil{n/2}$, we see that   $\alpha(G\less K)=2$. 
\end{proof}

\noindent {\bf Claim\refstepcounter{counter}\label{nodd}  \arabic{counter}.}   
$n$ is odd. 
 
 \begin{proof}Suppose $n$ is even. Let  $x\in V(G)$. By the minimality of $n$,    $oh(G)\ge oh(G\less\{x\}) \ge \ceil{(n-1)/2}=\ceil{n/2}$, a contradiction. 
 \end{proof}

\noindent {\bf Claim\refstepcounter{counter}\label{ell}  \arabic{counter}.} 
If $\omega(G)= \ceil{n/2}-\ell$ for some integer $\ell\ge1$, then  $n\le 4\ell-1$. Moreover,  $\ell\ge \omega$ and $2\omega(G)\le \ceil{n/2}$. 

\begin{proof} Suppose $n\ge 4\ell+1$. 
Since $\omega(G)= \ceil{n/2}-\ell$, we see that $n=2\omega(G)+2\ell-1$. 
Let $D$ be a maximum clique in $G$ and $G' := G \backslash D$. Then $\omega(G') \leq \omega(G)$. We next show that $G'$ contains $\ell  $ pairwise disjoint seagulls. Note  that $|G'| =n-\omega(G)=\lfloor n/2\rfloor+\ell\geq 3\ell$ because $n\ge 4\ell+1$. Moreover, $G'$ admits an $\ell$ anti-matching, else $\omega(G')\ge n-\omega(G)-2(\ell-1) = \omega(G) +1$, a contradiction. For each clique $K$ in $G'$, we see that $|K| \leq \omega(G)$. Recall that  $K^*$ denotes the set of vertices $v$ in $G'\less K$ such  that  $v$ is neither complete nor anti-complete to $K$. We claim that  $|K|-|K^*|\le \omega(G)-1$. This is trivially true   when $|K| \le  \omega(G)-1$. Suppose   $|K| = \omega(G)$. Then $|G' \backslash K| = n-2\omega(G)=2\ell-1\ge1$ and no vertex in $V(G')\less K$ is complete to $K$ in $G$. By Claim~\ref{K}, $G'$ is connected and so $K$ is not anti-complete to  $V(G')\less K$.   Thus $G' \backslash K$ has at least one vertex that 
is neither complete nor anti-complete to $K$ in $G'$, and so  $|K|-|K^*|\le \omega(G)-1$. This proves that  $|K|-|K^*|\le \omega(G)-1$, as claimed.  It follows that  $K$ has capacity at least $(n-2\omega(G)+1)/2=\ell$ in $G'$. Finally, suppose $G'$ is not $\ell$-connected. Let $S$ be a minimum vertex cut in $G'$. Then $|S| \leq \ell-1$ and $G' \backslash S$ has exactly two components, say $G_1$ and $G_2$. Note that $V (G_1)$ and $V (G_2)$ are disjoint cliques; moreover, $V (G_1)$ is anti-complete to $V (G_2)$. We can then partition $D$ into $D'$ and $D''$ such that $V (G_1) \cup D'$
and $V (G_2) \cup  D''$ are disjoint cliques. But then $\omega(G)\geq  \max\{|V (G_1) \cup D'|, |V (G_2) \cup  D''|\} \geq \ceil{(n-\ell+1)/2}>\omega(G)$, a
contradiction. This proves that $G'$ is $\ell$-connected. By Theorem~\ref{t:seagulls} applied to $G'$ and $\ell$, we see
that $G'$ has $\ell$ pairwise disjoint seagulls. Such $\ell$ seagulls, together with the clique $D$, yield an
odd clique minor of order $\lceil n/2\rceil$ in $G$, a contradiction. This proves that $n\le 4\ell-1$. Since $n=2\omega(G)+2\ell-1$, we see that $2\omega(G)+2\ell-1=n\le 4\ell-1$. Thus   $\ell\ge \omega$ and so $2\omega(G)\le \ceil{n/2}$. 
\end{proof}

\noindent {\bf Claim\refstepcounter{counter}\label{clique}  \arabic{counter}.} 
$\omega(G)\ge7$ and so $\omega(G) \leq \ceil{n/2} -7$ and $n\ge 27$. Moreover, $\omega(G) = \ceil{n/2} -7$ if and only if  $n=27$. 

\begin{proof} Suppose $\omega(G)\le 6$.   Then $n\le R(3, 7)-1=22$. Let $\omega(G):=\ceil{n/2}-\ell$ for some integer $\ell$. Then $n=2\omega(G)+2\ell-1$ and  $\ell\ge1$ because $\omega(G)\le oh(G)<\ceil{n/2}$.  By Claim~\ref{ell},  $\ell\ge \omega$. It follows that $n=2\omega(G)+2\ell-1\ge 4\omega(G)-1$. Note that  $R(3, \omega(G)+1)\le 4\omega(G)-1$ when $\omega(G)\le 6$.   Thus  $G$ contains a clique of order $\omega(G)+1$, which is impossible.  This proves that $\omega(G)\ge 7$. Thus   $7\le \omega(G) \le\ceil{n/2}-1$ and so  $n\ge 15$.  Let $\omega(G) := \ceil{n/2} -\ell$. Then $\ell\ge1$. Since $n\ge 15$ and $R(3,5)=14$, by Claim~\ref{ell}, $\ell\ge\omega(G)\ge5$.  Thus $7\le \omega(G) \leq \ceil{n/2} -5$  and so $n\ge23$. Since $R(3,7)=23$, by Claim~\ref{ell} again, $\ell\ge7$.  Thus $7\le \omega(G) \leq \ceil{n/2}-7$  and so $n\ge27$; in addition if $n=27$, then $  \omega(G) = \ceil{n/2} -7$. By Claim~\ref{ell}, if $\omega(G) = \ceil{n/2} -7$, then  $n=27$. 
\end{proof}

\noindent {\bf Claim\refstepcounter{counter}\label{nbr}  \arabic{counter}.} 
 For any $v\in V(G)$, $N(v)$ is not a clique. Furthermore, $\alpha(G[N(v)]) = 2$ and $G[N(v)]$ is connected.
\begin{proof}
Let $v\in V(G)$.  By Claim~\ref{nnbr}, $V(G)\less N[v]$ is a clique. Thus $N(v)$ is not a clique and  
 $\alpha(G[N(v)]) = 2$  by Claim~\ref{K}. Suppose  $G[N(v)]$ is disconnected. Then $N(v)$ is the disjoint union of two   cliques, say $A_1$ and $A_2$. For each $u\in V(G)\less N[v]$, $u$ is complete to $A_1$ or $A_2$ in $G$. Define
\[B_1:=\{u\in V(G)\less N[v]\mid u \text{ is not complete to } A_2\} \text{ and } B_2:= V(G)\less (N[v]\cup B_1).\]
Then $A_1\cup B_1$    and $A_2\cup B_2$ are disjoint cliques in $G$. Then   $ \omega(G)=|A_1\cup B_1|= |A_2\cup B_2|=\ceil{(n-1)/2}$. Then $B_1\ne \es$ and $B_2\ne \es$. Let $y\in A_1$ and $z\in B_1$. Then the seagull $G[\{x, y, z\}]$, together with $A_2\cup B_2$, yields an odd clique minor of order  $ \ceil{n/2}$ in $G$, a contradiction.  
\end{proof}

Throughout  the remaining of the proof, let $x, y$ be two non-adjacent vertices in $G$.  
Since $\alpha(G)=2$, we see that every vertex in $V(G)\less \{x,y\}$ is adjacent to either $x$ or $y$. Let $A: =N(x)\backslash N(y) $, $B: =N(x)\cap N(y)$ and $C: =N(y)\backslash N(x) $.  Then both $A\cup\{x\}$ and $C\cup\{y\}$ are cliques in $G$ by Claim~\ref{nnbr}. \\

\noindent {\bf Claim\refstepcounter{counter}\label{ABC}  \arabic{counter}.}  
  $|B| \geq 13$ and  neither $A$ nor $C$ is anti-complete to $B$. Moreover, if  $|A|\le |C|$, then   $C$ is   not complete   to $B$.  
\begin{proof}
By Claim~\ref{degree},      $|A|\ge 1$ and  $|C|\ge1$.    By Claim~\ref{clique},  $\max\{|A \cup \{ x \}|, |C \cup \{ y \}|\}  \le \ceil{n/2}-7$.  Thus  $|B|=n-(|A \cup \{ x \}|+|C \cup \{ y \}|) \ge 13 $. By Claim~\ref{nbr} applied to $G[N(x)]$ and $G[N(y)]$, we see that   neither $A$ nor $C$ is anti-complete to $B$.    Suppose next $|A|\le |C|$ and  $ C$ is   complete to $B$. Then $C\cup \{y\}$, together with an odd clique minor of order   $\ceil{ |B|/2}$ in $G[B]$, yields an odd clique minor of order   $  \ceil{n/2}$ in $G$, a contradiction.   This proves that  if $|A|\le |C|$, then   $C$ is not complete   to $B$. 
  \end{proof}

 \noindent {\bf Claim\refstepcounter{counter}\label{maxdeg}  \arabic{counter}.}  
 $\ceil{n/2}+5\le \delta(G)\le n-5$.  
 
 \begin{proof}
 By Claim~\ref{clique} and Claim~\ref{nnbr}, we see that $\delta(G)\ge n-1-\omega(G)\ge \ceil{n/2}+5$. Suppose $\delta(G)= n-4$.  Then $\Delta(G)= n-3$, else $G$ is $(n-4)$-regular on $n$ vertices, which is impossible because $n$ is odd. We may assume that $d_G(y)= \Delta(G)$.  Then $ |A|=1$. Let $A:=\{z\}$.   If  there exists a vertex $u\in N(y)$ such that $G[\{x,z, u\}]$ is a seagull, then  $G[\{x,z, u\}]$, together with $y$ and an odd  clique minor of order $\ceil{|N(y)\less \{u\}|/2}=\ceil{n/2}-2$ in $G[N(y)\less \{u\}]$, yields an odd clique minor of order $\ceil{n/2}$ in $G$,  a contradiction. Thus  $z$ is complete to $B$.     Note that    $1\le |C|\le2$, else $d_G(x)\le n-5$. By the minimality of $n$, $G[B]$ has an odd clique minor of order $\ceil{(n-3-|C|)/2}$.  This, together with $x$ and $z$, yields an odd clique minor of order $\ceil{(n+1-|C|)/2}$ in $G$. It follows that $|C|\ge2$ and so $|C|=2$.    
  By Claim~\ref{ABC}, we see that   $C$ is  not complete   to $B$.   Thus  there exists a vertex $u\in B$ such that $G[\{u,y,v\}]$ is a seagull, where $v\in C$. But then $G[\{u,y,v\}]$, $x,z$, together with an odd clique minor of order $\ceil{|B\less\{u\}|/2}=\ceil{(n-6)/2}$ in $G[B\less\{u\}]$, yield an odd clique minor of order $\ceil{n/2}$ in $G$, a contradiction. This proves that $\delta(G)\le n-5$, as desired. 
  \end{proof}

 Recall that $G$ is $H$-free.   
 By Claim~\ref{clique}, $H\ne K_7$. 
 We  consider  the following five cases when $H$ is isomorphic to   $K_1+P_4$,  $ K_2+(K_1\cup K_3)$, $K_1+(K_1\cup K_4)$, $K_7^-$,   or the kite graph.       \\

\noindent {\bf Case 1.}  $H = K_1 + P_4$.\\

\noindent In this case we choose $x$ with $d_G(x) = \delta(G)$. Then $|C| \geq |A|$. By Claim~\ref{ABC}, we see that   $C$ is  not complete  to $B$. Let $z \in B$ and $w \in C$ such that $zw \notin E(G)$. If there exists $b \in B $ with $b\ne z$ such that $bz, bw \in E(G)$, then $G[\{b, x, z, y, w\}] = K_1 + P_4$, a contradiction. Thus we can partition $B$ into $B_1$ and $B_2$ such that 
\[B_1:= \{b\in B\mid bw \notin E(G)\} \text{ and } B_2 := \{ b\in B\mid bz \notin E(G)\}.\]
 Then $z\in B_1$. If there exist $b_1 \in B_1$ and $b_2 \in B_2$ such that $b_1b_2 \in E(G)$, then $b_1\ne z$ and $G[\{y, z, b_1, b_2, w\}] = K_1 + P_4$, a contradiction. Thus $B_1$ is anti-complete to $B_2\cup \{w\}$, and so  $B_1$ and $B_2\cup \{w\}$ are two cliques in $G$.  We claim that $C$ is complete to $B_2$. Suppose there exist $c\in C $ and $b_2 \in B_2$ such that $cb_2 \notin E(G)$. Then $c\ne w$ and $cz \in E(G)$ because $b_2z\notin E(G)$. But then $G[\{y, z, c, w, b_2\}] = K_1 + P_4$, a contradiction. This proves that  $C$ is complete to $B_2$, as claimed.  Note that $B_1$ and $C \cup B_2$ are two disjoint cliques.  If there exist $c \in C $ and $b_1, b_1' \in B_1 $ such that $cb_1 \notin E(G)$ and $cb_1' \in E(G)$, then $c\ne w$ and $G[\{y, b, b', c, w\}] = K_1 + P_4$, a contradiction. Thus each vertex in $C$ is either complete or anti-complete to $B_1$.  We  partition $C$ into $C_1$ and $C_2$ such that 
 \[C_1:= \{c\in C \mid c \text{ is anti-complete to } B_1 \}  \text{ and } C_2:= \{c\in C  \mid c \text{ is complete to } B_1\}.\] 
 Then $w\in C_1$ and  $B_1$ is anti-complete to $B_2 \cup C_1$.   By Claim~\ref{nbr} applied to $G[N(y)]$, we have $C_2\ne\es$.  Finally we partition $A$ into $A_1$ and $A_2$ such that
\[A_1:= \{a\in A \mid a \text{ is  not complete to } B_2\cup C_1 \}  \text{ and } A_2:= \{a\in A \mid a \text{ is  complete to } B_2\cup C_1\}.\] 
Then $A_1$ is complete to $B_1\cup A_2$ and $A_2$ is complete to $B_2\cup C_1$. Note that  $A_1\ne \es$ and $A_2 \neq \emptyset$, else $\omega(G) \geq \lceil(n-1)/2\rceil$, contrary to Claim~\ref{clique}. Then $B_2 =\emptyset$, else say  $b_2 \in B_2$, then  $G[\{b_2, x, a_2, c_1, y\}] = K_1 + P_4$, where  $a_2 \in A_2$ and $c_1 \in C_1$, a contradiction. Thus $A_1$ is not complete to $C_1$. By Claim~\ref{nbr} applied to $G[N(x)]$, we see that $B=B_1$ is not complete to $A_2$.   Let $b \in B $ and $a_2 \in A_2$ such that $ba_2 \notin E(G)$. Then $a_2$ is anti-complete to $B$, else say $a_2b'\in E(G)$ for some $b'\in B$, then   $G[\{b', y, b, x, a_2\}] = K_1 + P_4$, a contradiction. Suppose there exists $a_2'\in A_2$ such that $a_2'$ is adjacent to a vertex $b'$ in  $B$. Then $G[\{a_2', b', a_1, a_2, c_1\}] = K_1 + P_4$, where $a_1\in A_1$ and $c_1\in C_1$ such that $a_1c_1\notin E(G)$, a contradiction. This proves that $B$ is anti-complete to $A_2$. Then   $A_1$ is  anti-complete to $C_1$, else say $a_1c_1\in E(G)$ for some  $a_1 \in A_1 $ and $c_1 \in C_1$, then $G[\{a_1,  b, x, a_2, c_1\}] = K_1 + P_4$, a contradiction.  Finally we prove that $A$ is anti-complete to $C_2$. Suppose  there exist $ a\in A$ and $c_2 \in C_2$ such that $ac_2 \in E(G)$.   Then $G[\{b, x, a, c_2, y\}] = K_1 + P_4$ when $a\in A_1$; $G[\{c_2, a, w, y,z\}] = K_1 + P_4$ when $a\in A_2$, a contradiction.  This proves that $A$ is anti-complete to $C_2$. \medskip

We now partition $V(G)$ into $X_1, \ldots, X_5$, where 
\[X_1:=B=B_1, X_2:=C_2\cup\{y\}, X_3:=C_1, X_4:=A_2 \text{ and } X_5:=A_1\cup\{x\}.\] Then   $X_1$ is anti-complete to $X_3\cup X_4$, $X_2$ is anti-complete to $X_4\cup X_5$, $X_3$ is anti-complete to $X_5$,    $X_i$ is complete to $X_{i+1}$ for each $i\in[4]$, and $X_1$ is complete to $X_5$. It follows that  $G$ is an inflation of $C_5$. By Lemma~\ref{inflation}, $oh(G)\ge\chi(G)\ge \ceil{n/2}$, a contradiction. \bigskip

\noindent {\bf Case 2.} $H = K_2 + (K_1\cup K_3)$.\medskip

\noindent In this case we choose $x$ with $d_G(x)=\delta(G)$. Subject to the choice of $x$, we choose $y$ so that $d_G(y)$ is minimum among all the vertices in $V(G)\less N[x]$.   Then $|A|\le |C|$ and $|C|\ge3$. Let   $C:=\{c_1, \ldots, c_{|C|}\}  $.  For $i, j\in[|C|]$ with $i\ne j$, if $G[B\cap N(c_i)\cap N(c_j)]$ contains an edge, say $uv$,  then   $G[\{u, v, x, c_1,c_2, y\}  ]=H$, a contradiction.  Thus $B\cap N(c_i)\cap N(c_j)$ is an independent set of order at most two, and so $e_G(\{c_i, c_j\}, B)\le |B|+2$.  Since $|C|\ge 3$, we may assume that $e_G(c_1,B)\le \ceil{(|B|+4)/3}$. By the choice of $y$, we see that 
\[|B|+|C|=d_G(y)\le d_G(c_1)\le e_G(c_1,B)+|C|+|A|\le \ceil{(|B|+4)/3}+|C|+|A|,\]
which implies that $|B|\le \ceil{(|B|+4)/3}+|A|$ and so $ |A|\ge7$ because $|B|\ge13$.
 Let \[B_1:= \{b \in B\mid b \text{ is anti-complete to } C\} \text { and } B_2:  = B\less B_2.\]
 Then $B_1$ is a clique. By Claim~\ref{nbr} applied to $G[N(y)]$,  $|B_2|\ge1$. Note that every vertex $b \in B_2$ is adjacent to  at least $|C|-2$  vertices  in $C$, else $G[\{b, y\} \cup C]$ is not  $H$-free.    Recall that for each pair of vertices $c_i, c_{i+1}$ in  $C$, $e_G(\{c_{2i-1}, c_{2i}\}, B_2)\le |B_2|+2$ for each $ i\in[ \lfloor{|C|/2}\rfloor]$. Then 
 \[(|C|-2)|B_2|\le e_G(B_2, C)\le \left\lfloor{\frac{|C|}2}\right\rfloor (|B_2|+2) +\left(\left\lceil{\frac{|C|}2}\right\rceil-\left\lfloor{\frac{|C|}2}\right\rfloor\right)(|B|+2)\le\left\lceil{\frac{|C|}2}\right\rceil(|B_2|+2).\]
  It follows that $|B_2|\le 8$ because  $|C|\ge|A|\ge7$. Note that  $B_1$ and $C$ are two disjoint cliques and $B_1$ is anti-complete to $C$. We  now partition $A$ into $A'$ and $A''$ such that $A'\cup B_1$ and $A''\cup C$ are two disjoint cliques in $G$. But then   $\omega(G)\ge \max\{|A'\cup B_1|, |A''\cup C|\}\ge \ceil{(n-|B_2|-2)/2}\ge \ceil{n/2}-5$, contrary to Claim~\ref{clique}. \\

\noindent {\bf Case 3.} $H =K_1 + (K_1\cup K_4)$.\medskip

\noindent   In this case we choose $x$ with $d_G(x)=\delta(G)\le n-5$.  Then $|C|\ge3$.  Let $b\in B$.  If  $b$ is   not adjacent to at least four vertices in $B\cup C$, then $G[N[y]]$ is not  $H$-free. Thus $b$ is not adjacent to at most three vertices in   $C \cup B$.   Moreover, if 
  $b$ is adjacent to  three vertices in $C$, say $c_1, c_2, c_3$, then $G[\{b, x, y, c_1, c_2, c_3 \}]=H$, a contradiction. Thus $b$ is adjacent to at most two vertices in $C$.   Thus   $  |C|\le 5$  and 
  \[2(\delta(G)-|A|) =2|B|\ge e_G(B,   C)\ge |C|(\delta(G)-|C|-|A|).\] 
It follows that $(|C|-2)\delta(G)\le  (|C|-2)|A|+|C|^2$, and so 
\[ |B|+|A|=\delta(G) \le |A|+|C|+ \left\lfloor\frac{2|C|}{|C|-2}\right\rfloor.\]
 Since  $3\le |C|\le 5$, we see that $|B\le |C|+ \left\lfloor\frac{2|C|}{|C|-2}\right\rfloor\le 9$, contrary to Claim~\ref{ABC}. \\

 \noindent {\bf Case 4.} $H = K_{7}^{-}$.\medskip

\noindent  Since $R(3,5) = 14$ and $G[B]$ is $K_5$-free, we see that $|B|\le 13$. By Claim~\ref{ABC}, $|B|=13$. 
Then $\omega(G)\ge \max\{|A\cup\{x\}|, |C\cup\{y\}| \}\ge \ceil{(n-|B|)/2}= \ceil{n/2}-7$. By Claim~\ref{clique}, we see that  $\omega(G)= \max\{|A\cup\{x\}|, |C\cup\{y\}| \}=\ceil{n/2}-7$ and $n=27$.  Then $|A|=|C|=6$ and so $d_G(x)=d_G(y)=19$. By the arbitrary choice of $x$ and $y$, we see that $G$ is $19$-regular on $27$ vertices, which is impossible. \\

\noindent {\bf Case 5.} $H$ is the kite graph.\medskip

\noindent In this case we assume that $|A|\le |C|$. If there exists a vertex $b \in B$ such that $b$ is neither complete nor anti-complete to $C$, then  $G[\{x, b, y\} \cup C]$  is not kite-free. Thus each vertex in $B$ is either complete or anti-complete to $C$ in $G$. Let 
\[B_1: = \{b\in B \mid b \text{ is anti-complete to } C\} \text { and  } B_2:= \{b\in B \mid b \text{ is  complete to } C\}.\] 
Then $B=B_1\cup B_2$. If $|B_1| \geq 2$, then $G[\{x, y, c\} \cup B_1]$  is not kite-free  for any $c \in C$, a contradiction. Thus  $|B_1| \le 1$. Recall  that $|A|\le |C|$. 
 By the minimality of $n$, we see that  $oh(G[B_2]) \geq \ceil{|B_2|/ 2} = \ceil{(|B|-|B_1|)/2}$. This,  together with $C\cup\{y\}$, implies that  
 \begin{align*}
 \ceil{n/2}>oh(G) &\geq oh(G[B_2]) + |C \cup \{y\}| \\
 &\ge \ceil{(|B|-|B_1|)/2}+|C\cup\{y\}|\\
 &\ge \begin{cases}
 \ceil{|B|/2}+\ceil{(n-|B|)/2} &\text{ if } B_1=\es\\
\ceil{(|B|-1)/2}+\ceil{(n-|B|+1)/2} &\text{ if } |A|\le |C|-1.
\end{cases}
\end{align*}
  It follows that $|B_1|=1$ and $|A|=|C|$. By the arbitrary choice of $x$ and $y$, we see that   $G$ is regular. Let $u\in B_1$. Note that $uy\in E(G)$. Then there exists a vertex $v\in A\cup B_2$ such that $uv\notin E(G)$. But then the seagull $G[\{u, x, v\}]$, together with the odd clique minor of order   $  \ceil{|B|/2}-1$ in $G[B\less  \{u,v\}]$ and $C\cup\{y\}$,  yields an odd clique minor of order $\ceil{n/2}$ in $G$, a contradiction.\bigskip
 
%
%

This completes the proof of Theorem~\ref{main}. 

\section{Concluding Remarks}\label{s:remarks}

Let $2K_2$ denotes the complement of the cycle $C_4$. Micu~\cite{micu} in 2005 proved that   Hadwiger's Conjecture holds for $2K_2$-free graphs.  Plummer, Stiebits and Toft~\cite{pst}  proved that Hadwiger's Conjecture  holds for $C_5$-free graphs $G$ with $\alpha(G)\le2$.  Both results have very short proofs.  
It remains an  intriguing open question whether Odd  Hadwiger's Conjecture also holds for $2K_2$-free graphs and   $C_5$-free graphs  $G$ with $\alpha(G)\le2$. \medskip

Let $W_5:=K_1+C_5$. Relying heavily on Theorem~\ref{t:clique}, Bosse~\cite{bo} proved that Hadwiger's Conjecture   holds for $W_5$-free graphs $G$ with $\alpha(G)\le2$. It would be interesting to explore whether Theorem~\ref{t:oclique} can be used to establish  Odd Hadwiger's Conjecture for the same class of graphs.  \medskip

Finally, Steiner~\cite{ste} proved that Odd Hadwiger's Conjecture holds for line-graphs of simple graphs. A natural direction for further research is to investigate whether Steiner's result can be extended to quasi-line graphs,  where a graph $G$ is \dfn{quasi-line} if for each $v\in V(G)$, $N(v)$ is covered by two cliques. A similar question can be posed for each of the results surveyed in~\cite{cv}.

\end{document}